\documentclass[11pt]{amsart}

\usepackage[margin=1.1in]{geometry}
\usepackage{amsmath,amssymb,amsthm,url}

\title{Bounded gaps between primes with a given primitive root}

\author[P. Pollack]{Paul Pollack}
\address{Department of Mathematics\\University of Georgia\\Athens, GA 30602}
\email{pollack@uga.edu}


\DeclareMathAlphabet{\curly}{U}{rsfs}{m}{n}

\newtheorem{thm}{Theorem}[section]

\newtheorem*{apc}{Artin's primitive root conjecture}

\newtheorem{lem}[thm]{Lemma}

\newtheorem{prop}[thm]{Proposition}

\newtheorem*{theorem*}{Theorem}
\theoremstyle{remark}\newtheorem*{remark}{Remark}

\numberwithin{equation}{section}

\newcommand{\diff}{\mathrm{d}}
\newcommand{\leg}[2]{\genfrac{(}{)}{}{}{#1}{#2}}
\newcommand\Li{\mathrm{Li}}
\newcommand\I{\mathcal{I}}
\newcommand\F{\mathbf{F}}

\newcommand\Q{\mathbf{Q}}
\newcommand\R{\mathbf{R}}
\newcommand\Hh{\mathcal{H}}
\newcommand\lcm{\mathrm{lcm}}
\newcommand\Gal{\mathrm{Gal}}
\newcommand\A{\curly{A}}
\newcommand\Pp{\curly{P}}
\newcommand\Qq{\curly{Q}}
\newcommand\Frob{\mathrm{Frob}}
\newcommand\E{\mathbf{E}}
\newcommand\Prob{\mathbf{Pr}}

\renewcommand{\phi}{\varphi}
\makeatletter
\renewcommand{\pod}[1]{\mathchoice
  {\allowbreak \if@display \mkern 18mu\else \mkern 8mu\fi (#1)}
  {\allowbreak \if@display \mkern 18mu\else \mkern 8mu\fi (#1)}
  {\mkern4mu(#1)}
  {\mkern4mu(#1)}
}


\begin{document}

\begin{abstract} Fix an integer $g \neq  -1$ that is not a perfect square. In 1927, Artin conjectured that there are infinitely many primes for which $g$ is a primitive root. Forty years later, Hooley showed that Artin's conjecture follows from the Generalized Riemann Hypothesis (GRH). We inject Hooley's analysis into the Maynard--Tao work on bounded gaps between primes. This leads to the following GRH-conditional result: \emph{Fix an integer $m \geq 2$. If $q_1 < q_2 < q_3 < \dots$ is the sequence of primes possessing $g$ as a primitive root, then $\liminf_{n\to\infty} (q_{n+(m-1)}-q_n) \leq C_m$, where $C_m$ is a finite constant that depends on $m$ but not on $g$.} We also show that the primes $q_n, q_{n+1}, \dots, q_{n+m-1}$ in this result may be taken to be consecutive.
\end{abstract}

\maketitle

\section{Introduction}
The following conjecture was proposed by Emil Artin in the course of a September 1927 conversation with Helmut Hasse:

\begin{apc} Fix an integer $g \neq  -1$ that is not a square. There are infinitely many primes $p$ for which $g$ is a primitive root modulo $p$. In fact, the number of such $p\leq x$ is (as $x\to\infty$) asymptotically $c_g \pi(x)$ for a certain $c_g > 0$.
\end{apc}

While there is a substantial literature surrounding Artin's conjecture (lovingly catalogued in the survey \cite{moree12}), we still know infuriatingly little. In particular, there is no specific value of $g$ which is known to occur as a primitive root for infinitely many primes. However, thanks to work of Heath-Brown \cite{HB86} (refining earlier results of Gupta and Murty \cite{GM84}), we know that at least one of $2, 3$, and $5$ has this property. In fact, one can replace ``$2, 3$, and $5$'' with any list of three nonzero multiplicatively independent integers.

In a seminal 1967 paper, Hooley \cite{hooley67} (see also his exposition in \cite[Chapter 3]{hooley76}) showed that the Chebotarev density theorem with a sufficiently sharp error term would imply the quantitative form of Artin's conjecture. Moreover, he showed that such a variant of Chebotarev's density theorem --- at least for the cases relevant for this application --- follows from the Generalized Riemann Hypothesis (GRH) for Dedekind zeta functions. Thus, under GRH, we have a fairly satisfactory complete solution to Artin's conjecture.

In this paper, we combine Hooley's work on Artin's conjecture with recent methods used to study gaps between primes. In sensational work of Maynard \cite{maynard14} and Tao, it is shown that $\liminf_{n\to\infty} (p_{n+m-1} - p_n) < \infty$ for every $m$.
Here $p_1 < p_2 < p_3 < \dots$ is the sequence of all primes, in the usual order. Our main theorem is an analogous bounded gaps result for primes possessing a prescribed primitive root.
\begin{thm}[conditional on GRH]\label{thm:main} Fix an integer $g \neq -1$ and not a square. Let $q_1 < q_2 < q_3 < \dots$ denote the sequence of primes for which $g$ is a primitive root. Then for each $m$,
\[ \liminf_{n\to\infty} (q_{n+m-1} - q_n) \leq C_m, \]
where $C_m$ is a finite constant depending on $m$ but not on $g$.
\end{thm}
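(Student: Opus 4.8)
The plan is to run the Maynard--Tao machinery on the set $\A=\{p : g\text{ is a primitive root mod }p\}$, feeding in Hooley's GRH-conditional analysis as the distributional input. The one real obstacle is to keep the resulting diameter bound from secretly depending on $g$: the naive Maynard--Tao comparison loses a factor equal to the Artin density $c_g$, and $c_g$ is not bounded below over admissible $g$ (it degenerates when $g$ is a high perfect power, e.g.\ $g=g_1^h$ with $h$ divisible by many small odd primes).

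First I would reduce to the case that $g$ is not a perfect power. Write $g=g_0^{h}$ with $g_0$ not a perfect power; as $g$ is not a square, $h$ is odd, and then $g_0\neq-1$ and $g_0$ is not a square. The point of this reduction is that for $g_0$ not a perfect power the Artin density $c_{g_0}=\sum_{n\geq1}\mu(n)/[\Q(\zeta_n,g_0^{1/n}):\Q]$ exceeds an absolute constant $c_0>0$: the only entanglement present is that of $g_0^{1/2}$ with cyclotomic fields, which perturbs the Artin constant by a bounded factor --- a uniformity that fails for general $g$. Next choose the admissible tuple carefully: with $P=\prod_{3\leq\ell\leq 2k}\ell$, put $\Hh=\{2P,4P,\dots,2kP\}$. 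This $\Hh$ is admissible, has diameter $2(k-1)P$ depending only on $k$, and enjoys the property that $h_i\equiv 0\pmod\ell$ for every odd prime $\ell\leq 2k$. Apply the $W$-trick with $W=\prod_{p\leq D_0}p$, where $D_0=D_0(g)$ is taken larger than $2k$, than $\operatorname{diam}(\Hh)$, and than every prime dividing $g_0 h$ or dividing the discriminant of $\Q(\sqrt{g_0})$; then choose the residue $\nu\bmod W$ so that every $\nu+h_i$ is a unit mod $W$ \emph{and} $\nu+h_i\not\equiv1\pmod\ell$ for every odd prime $\ell\mid W$ and every $i$. This is possible because for $\ell\leq 2k$ the constraints collapse to $\nu\not\equiv 0,1\pmod\ell$ (using $h_i\equiv0$), while for $2k<\ell\leq D_0$ there are at most $2k<\ell$ residues to avoid.

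Restrict $n$ to $n\equiv\nu\pmod W$. Then whenever $n+h_i$ is prime we have $\gcd(h,(n+h_i)-1)=1$, so $g=g_0^h$ is a primitive root modulo $n+h_i$ precisely when $g_0$ is; it therefore suffices to exhibit, for arbitrarily large $x$, an $n\in[x,2x]$ with at least $m$ of the $n+h_i$ lying in $\A_{g_0}=\{p:g_0\text{ is a primitive root mod }p\}$. The analytic heart is a Bombieri--Vinogradov-type theorem for $\A_{g_0}$: under GRH, Hooley's inclusion--exclusion over $n\mid p-1$ --- the effective Chebotarev density theorem applied to $\Q(\zeta_{\lcm(q,n)},g_0^{1/n})$ for $n$ up to a small fixed power of $x$, combined with Brun--Titchmarsh and elementary estimates for larger $n$ --- gives, for some absolute $\theta>0$,
\[ \sum_{q\leq x^{\theta}}\ \max_{(a,q)=1}\ \Bigl|\,\#\{p\leq x:p\in\A_{g_0},\ p\equiv a\pmod q\}-\mathfrak{D}(q,a)\,\Li(x)\,\Bigr|\ \ll_{g_0}\ \frac{x}{(\log x)^{A}}, \]
where $\mathfrak{D}(q,a)$ is the Artin density restricted to the progression $a\bmod q$. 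Plugging this into the Maynard--Tao sums $S_1=\sum_n w_n$ and $S_2=\sum_n \#\{i:n+h_i\in\A_{g_0}\}\,w_n$ (with the usual GPY/Maynard weights $w_n$ supported on $n\equiv\nu\pmod W$), the $i$-th term of $S_2$ comes out to $\delta_i(1+o(1))$ times its analogue in the classical calculation, for a limiting density factor $\delta_i$. The crucial point is that $\delta_i\geq c_{g_0}\geq c_0$ uniformly in $g$: the local factor of $\delta_i$ at each prime $\ell\mid W$ is $\geq 1$ exactly because we arranged $\nu+h_i\not\equiv1\pmod\ell$, and the local factors at the sieve primes $\ell\mid\prod_{j\neq i}[d_j,e_j]$ (all exceeding $D_0$) are $1+o(1)$ since, by the shape of $\Hh$, no such $\ell$ divides any nonzero $h_i-h_j-1$.

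Since $S_1$ is identical to the classical quantity, a routine computation then yields
\[ \frac{S_2}{S_1}\ \geq\ \Bigl(\tfrac{\theta}{2}c_0+o(1)\Bigr)\,\frac{\sum_{i=1}^{k}J_k^{(i)}(F)}{I_k(F)}, \]
which exceeds $m-1$ for all large $x$ as soon as $\sup_F\bigl(\sum_i J_k^{(i)}(F)/I_k(F)\bigr)>2(m-1)/(\theta c_0)$. As this supremum tends to infinity with $k$ while $\theta$ and $c_0$ are absolute constants, we may fix $k=k(m)$ depending only on $m$; then $S_2>(m-1)S_1$, so some $n\in[x,2x]$ has at least $m$ of $n+h_1,\dots,n+h_k$ in $\A_{g_0}$, hence in $\A$. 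These $m$ primes lie in an interval of length at most $\operatorname{diam}(\Hh)=2(k(m)-1)\prod_{3\leq\ell\leq 2k(m)}\ell=:C_m$, and since this holds for arbitrarily large $x$ we conclude $\liminf_{n\to\infty}(q_{n+m-1}-q_n)\leq C_m$ with $C_m$ depending on $m$ alone. The crux, as flagged, is this $g$-uniformity: the reduction to $g_0$ (hiding the prime factors of the exponent $h$ inside $W$) together with the engineered tuple $\Hh$ and the matching choice of $\nu$ exist precisely to keep the density factor in $S_2$ bounded below independently of $g$, after which the uniform-in-$q$ Hooley estimate and the standard sieve bookkeeping for $S_1,S_2$ are routine.
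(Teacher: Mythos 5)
Your overall architecture---Maynard--Tao weights combined with Hooley's GRH-conditional effective Chebotarev analysis, with a pre-sieve engineered to keep the density factor bounded below uniformly in $g$---is the right one, and your reduction $g=g_0^h$ with the prime factors of $h$ absorbed into $W$ is a legitimate device for the perfect-power degeneracy. But there is a genuine gap at the prime $2$: nothing in your choice of $\nu$ controls whether $g_0$ is a quadratic residue modulo the primes you detect. Since you place every prime dividing the discriminant $d$ of $\Q(\sqrt{g_0})$ below $D_0$, when $d$ is odd we have $d\mid W$, so the Kronecker symbol $\leg{d}{p}$ is \emph{constant} on the progression $p\equiv \nu+h_i\pmod{W}$; if that frozen value is $+1$ for some $i$ (possibly for every $i$), then every prime detected in that slot has $g_0$ as a quadratic residue, hence never as a primitive root, and your density factor $\delta_i$ is $0$, not $\geq c_0$. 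Your assertion that the local factor of $\delta_i$ at each prime $\ell\mid W$ is at least $1$ is correct for odd $\ell$ (because $\ell\nmid p-1$ by construction), but it fails for the quadratic condition, which is exactly the delicate entanglement in Artin's problem; it cannot be waved away by the lower bound $c_{g_0}\geq c_0$, since that bound refers to unrestricted primes, not to primes frozen in your residue class.

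Repairing this forces you to choose $\nu$ so that $\leg{d}{\nu+h_i}=-1$ simultaneously for all $1\leq i\leq k$, and that is not routine: when $d$ has a prime factor $D_1$ exceeding the smoothness bound of your tuple, one must solve a system of $k$ prescribed quadratic-character values $\leg{\nu_2+h_i}{|D_1|}=\epsilon_i$, which requires the Weil bound (the solution count is at least $|D_1|/2^k-(k-1)\sqrt{|D_1|}-k$) and hence requires $|D_1|\gg k^2 4^k$ together with the $h_i$ remaining distinct modulo $D_1$. This is precisely why the paper takes $K=9k^2\cdot 4^k$ and $h_i=(i-1)K!$, and why its pre-sieving lemma (factoring $d$ into prime discriminants and treating a possible large prime discriminant factor via the character-sum lemma) is the technical heart; your tuple $\{2P,4P,\dots,2kP\}$ with $P=\prod_{3\leq \ell\leq 2k}\ell$ only guards against primes up to $2k$ and does not by itself support this step. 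Note also that once the quadratic condition is secured, your lower bound $c_{g_0}\geq c_0$ becomes unnecessary: sieving out all odd primes up to a threshold tending to infinity (the paper uses $\log\log\log N$) together with the forced nonresidue condition makes the detected primes have $g$ as a primitive root with relative density $1-o(1)$, so one can take the same $k$ as in Maynard's unrestricted theorem; the rest of your Hooley/Chebotarev and sieve bookkeeping then matches the paper's treatment of the remaining ranges of the test primes $q$.
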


In the concluding section of the paper, we show how to modify the proof of Theorem \ref{thm:main} to impose the additional restriction that the $m$ primes $q_n, q_{n+1}, \dots, q_{n+m-1}$ are in fact \emph{consecutive} (Theorem \ref{thm:consecutive}).

We remark that other recent work producing bounded gaps between primes in special sets has been done by Thorner \cite{thorner14}, who handles primes restricted by  Chebotarev conditions, and by Li and Pan \cite{LP14}, who work with primes $p$ for which $p+2$ is an `almost prime'.

\subsection*{Notation} The letters $p$ and $q$ always denote primes. Implied constants may depend on $k$ and on $g$, unless otherwise noted.

\section{Technical preparation}
\subsection{Configurations of quadratic residues and nonresidues} We will use that certain configurations of residues and nonresidues are guaranteed to appear for all large enough primes. This is a fairly standard consequence of the Riemann Hypothesis for curves as proved by Weil, but we give the argument for completeness. The following lemma is a special case of \cite[Corollary 2.3]{wan97}.

\begin{lem}\label{lem:weil} Let $p$ be a prime. Suppose that $f(T)$ is a monic polynomial in $\F_p[T]$ of degree $d$ and that $f(T)$ is not a square in $\F_p[T]$. Then
\[ \left|\sum_{a \bmod{p}} \leg{f(a)}{p}\right| \leq (d-1)\sqrt{p}. \]
\end{lem}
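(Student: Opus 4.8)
The plan is to deduce the estimate from the Riemann Hypothesis for curves over finite fields --- the Hasse--Weil point count --- which, as the surrounding discussion signals, we are free to invoke as a black box. Assume $p$ is odd (for $p=2$ the quadratic character is trivial), and extend the Legendre symbol by setting $\leg{0}{p}=0$.

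First I would reduce to the case that $f$ is squarefree. Since $f$ is monic, we may write $f = g^2 h$ with $g,h \in \F_p[T]$ monic and $h$ squarefree: take $h$ to be the product of the monic irreducible factors of $f$ occurring to odd multiplicity and $g$ the evident complementary product, so that $2\deg g + \deg h = d$. The hypothesis that $f$ is not a square in $\F_p[T]$ is precisely the statement that $\deg h \geq 1$. For $a \bmod p$ with $g(a)\neq 0$ we have $\leg{f(a)}{p} = \leg{g(a)^2 h(a)}{p} = \leg{h(a)}{p}$, whereas for each of the at most $\deg g$ values of $a$ with $g(a)=0$ the summand $\leg{f(a)}{p}$ vanishes. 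Hence
\[ \left|\sum_{a}\leg{f(a)}{p}\right| \leq \left|\sum_{a}\leg{h(a)}{p}\right| + \deg g \leq \left|\sum_{a}\leg{h(a)}{p}\right| + (d-\deg h)\sqrt{p}, \]
the last step using $\deg g = (d-\deg h)/2 \leq (d-\deg h)\sqrt p$. So it suffices to prove that $\bigl|\sum_a \leg{h(a)}{p}\bigr| \leq (e-1)\sqrt p$ for squarefree monic $h$ of degree $e\geq 1$; feeding that back into the display gives the lemma.

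For squarefree monic $h$, I would pass to the affine curve $C\colon y^2 = h(x)$, which is nonsingular precisely because $h$ is squarefree. Counting the points lying over each $x=a$ (there are $1+\leg{h(a)}{p}$ of them) gives $\#C(\F_p) = p + \sum_a \leg{h(a)}{p}$. Let $\tilde C$ be the smooth projective model of $C$: it is obtained by adjoining the points at infinity, of which there is exactly one when $e$ is odd and exactly two when $e$ is even (the latter because $h$, being monic, has square leading coefficient), and its genus equals $\lfloor (e-1)/2\rfloor$. Applying $|\#\tilde C(\F_p) - (p+1)| \leq 2\cdot(\text{genus})\cdot\sqrt p$ and peeling off the points at infinity yields $\bigl|\sum_a \leg{h(a)}{p}\bigr| \leq (e-1)\sqrt p$ when $e$ is odd, and $\bigl|\sum_a \leg{h(a)}{p}\bigr| \leq (e-2)\sqrt p + 1 \leq (e-1)\sqrt p$ when $e$ is even, which is exactly what is required.

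There is really no hard part here: the only input with content, Weil's theorem, is quoted, and all that remains is the routine bookkeeping of pinning down the genus and the number of points at infinity of $y^2=h(x)$ according to the parity of $e$, together with the check that the small discrepancies --- the points at infinity, and the $\deg g$ term in the reduction --- are comfortably absorbed into the stated bound. One could instead avoid curves altogether and invoke the same input through the $L$-function $L(u) = \exp\bigl(\sum_{n\geq 1}\frac{u^n}{n}\sum_{a\in\F_{p^n}}\leg{N_{\F_{p^n}/\F_p}(h(a))}{p}\bigr)$, with $N_{\F_{p^n}/\F_p}$ the norm map: Weil's theorem says this is a polynomial of degree $e-1$ all of whose reciprocal roots have absolute value $\sqrt p$, and since $\sum_a \leg{h(a)}{p}$ is minus the sum of those reciprocal roots, the bound is immediate.
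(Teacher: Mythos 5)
Your argument is correct, and it is worth noting that the paper itself offers no proof of this lemma at all: it is dispatched with a citation to \cite[Corollary 2.3]{wan97}, a general character-sum estimate that Wan in turn deduces from Weil's Riemann Hypothesis for curves. So what you have done is supply, in self-contained form, exactly the standard deduction that the citation outsources: strip off the square part $g^2$ of $f$ (losing at most $\deg g \leq (d-\deg h)\sqrt{p}$, which is absorbed into the bound), then count points on the smooth projective model of $y^2=h(x)$ for squarefree monic $h$, with the correct genus $\lfloor (e-1)/2\rfloor$ and the correct number of rational points at infinity (one for $e$ odd, two for $e$ even since the leading coefficient $1$ is a square), and check in each parity case that the discrepancies fit under $(e-1)\sqrt{p}$. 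Your closing $L$-function remark is the same input packaged essentially as Wan states it, so either route lands on the cited result. The one loose end is the parenthetical treatment of $p=2$: read literally, with a ``trivial quadratic character'' the stated inequality can fail (for $f=T^2+T+1$ over $\F_2$ the sum would be $2>\sqrt{2}$), so the honest statement is simply that the Legendre symbol presupposes $p$ odd; this is immaterial here, since in the paper's only application (Lemma \ref{lem:quadres}, used with $p=|D_1|>K>8$) the modulus is a large odd prime, but it would be cleaner to say ``assume $p$ odd, as the Legendre symbol requires'' rather than to argue the $p=2$ case away.
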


\begin{lem}\label{lem:quadres} Let $p$ be a prime, and let $k$ be a positive integer. Suppose that $h_1, \dots, h_k$ are integers no two of which are congruent modulo $p$. Suppose $\epsilon_1, \dots, \epsilon_k \in \{\pm 1\}$. The number of mod $p$ solutions $n$ to the system of equations
\begin{equation}\label{eq:systemstar} 	\leg{n+h_i}{p}= \epsilon_i \quad\text{for all}\quad 1 \leq i \leq k \end{equation}
is at least $\frac{p}{2^k} - (k-1)\sqrt{p} - k$.
\end{lem}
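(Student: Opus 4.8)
The plan is to detect the condition $\leg{n+h_i}{p} = \epsilon_i$ via the standard indicator $\frac{1}{2}\left(1 + \epsilon_i \leg{n+h_i}{p}\right)$, which equals $1$ when $\leg{n+h_i}{p} = \epsilon_i$ and $0$ when $\leg{n+h_i}{p} = -\epsilon_i$. So I would first write the number $N$ of solutions $n \bmod p$ to \eqref{eq:systemstar} as
\[
N \geq \sum_{n \bmod p}\; \prod_{i=1}^{k} \frac{1}{2}\left(1 + \epsilon_i \leg{n+h_i}{p}\right),
\]
where the inequality (rather than equality) accounts for the at most $k$ values of $n$ with $n + h_i \equiv 0 \pmod p$ for some $i$, where the product is $0$ rather than detecting nothing — actually each such $n$ contributes a nonnegative amount, so the sum over all $n$ is a valid lower bound for $N$ once we subtract those $k$ exceptional $n$; I will account for that $-k$ at the end.

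Next I would expand the product $\prod_i \frac{1}{2}(1 + \epsilon_i\leg{n+h_i}{p})$ into $2^{-k}$ times a sum over subsets $S \subseteq \{1,\dots,k\}$ of $\left(\prod_{i\in S}\epsilon_i\right)\leg{\prod_{i \in S}(n+h_i)}{p}$, and then swap the order of summation. The $S = \emptyset$ term contributes $2^{-k}\sum_{n \bmod p} 1 = p/2^k$, which is the main term. For each nonempty $S$, the polynomial $f_S(T) = \prod_{i \in S}(T + h_i)$ is monic of degree $|S| \geq 1$; since the $h_i$ with $i \in S$ are pairwise distinct mod $p$ (by hypothesis), $f_S$ is squarefree in $\F_p[T]$ and in particular not a square (as $\deg f_S \geq 1$), so Lemma \ref{lem:weil} applies and gives $\left|\sum_{n \bmod p} \leg{f_S(n)}{p}\right| \leq (|S| - 1)\sqrt{p}$.

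Then I would just collect the error: the total contribution of the nonempty $S$ is at most $2^{-k}\sum_{\emptyset \neq S} (|S|-1)\sqrt p = 2^{-k}\sqrt p \sum_{j=1}^{k}\binom{k}{j}(j-1)$, and a short binomial identity ($\sum_j \binom kj j = k 2^{k-1}$ and $\sum_j \binom kj = 2^k$, so $\sum_{j \ge 1}\binom kj (j-1) = k2^{k-1} - (2^k - 1) \le k 2^{k-1} < k 2^{k-1}$, which after dividing by $2^k$ is at most $k/2 \le k-1$ for $k \ge 2$; the case $k=1$ gives $0$). This yields $N \geq \frac{p}{2^k} - (k-1)\sqrt p - k$. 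The only real subtlety — and the step I would be most careful about — is the bookkeeping of the exceptional $n$ (those with $p \mid n + h_i$ for some $i$) and making sure the final binomial constant comes out as $(k-1)$ rather than something larger; everything else is a routine application of Lemma \ref{lem:weil}.
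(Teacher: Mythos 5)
Your argument is correct and is essentially the paper's proof: the same indicator $2^{-k}\prod_{i}\bigl(1+\epsilon_i \leg{n+h_i}{p}\bigr)$, the same expansion over subsets $S$ with Lemma \ref{lem:weil} applied to the squarefree polynomials $f_S$, and the same subtraction of $k$ to account for the $n$ with $p \mid n+h_i$ (the paper just bounds each nonempty-$S$ term crudely by $(k-1)\sqrt{p}$ instead of evaluating your binomial sum, which costs nothing here). One small blemish: your displayed inequality $N \geq \sum_{n}2^{-k}\prod_i\bigl(1+\epsilon_i\leg{n+h_i}{p}\bigr)$ is stated in the wrong direction, since an exceptional $n$ can contribute a positive amount without being a solution; but your prose and final bound correctly use $N \geq \sum_n 2^{-k}\prod_i\bigl(1+\epsilon_i\leg{n+h_i}{p}\bigr) - k$, so the conclusion is unaffected.
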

\begin{proof} For each $n$, let $\iota(n) = \frac{1}{2^k} \prod_{i=1}^{k} (1+\epsilon_i \leg{n+h_i}{p})$.  If we suppose $n \not\equiv -h_1$, \dots, $-h_k \pmod{p}$, then $\iota(n) = 1$ when \eqref{eq:systemstar} holds and $=0$ otherwise. Since $|\iota(n)| \leq 1$ for all $n$, the number of solutions to \eqref{eq:systemstar} is at least
$-k + \sum_{n \bmod{p}} \iota(n)$.  For each subset $S \subset \{1, 2, 3, \dots, k\}$, put $f_S(T) = \prod_{i\in S} (T+h_i) \in \F_p[T]$. Then
\[ \sum_{n \bmod{p}} \iota(n) = \frac{1}{2^k}\sum_{S \subset \{1, 2, \dots, k\}} \left(\prod_{i \in S}\epsilon_i\right) \sum_{n \bmod{p}} \leg{f_S(n)}{p}. \]
If $S=\emptyset$, then $f_S=1$, and we get a contribution of $\frac{p}{2^k}$. In all other cases,
$f_S$ is a nonsquare polynomial of degree at most $k$. By Lemma \ref{lem:weil}, the total contribution from all nonempty subsets of $\{1, 2, \dots, k\}$ is bounded in absolute value by
$\frac{2^k-1}{2^k} (k-1)\sqrt{p} \le (k-1)\sqrt{p}$. Thus, $\sum_{n \bmod{p}} \iota(n) \ge \frac{p}{2^k} - (k-1)\sqrt{p}$, and the lemma follows.
\end{proof}

\subsection{Effective Chebotarev} The next result is due in essence to Lagarias and Odlyzko \cite{LO77}, although the precise formulation we give is due to Serre \cite[\S2.4]{serre81}:

\begin{thm}[conditional on GRH]\label{thm:serre}  Let $L$ be a finite Galois extension of $\Q$ with Galois group $G$, and let $C$ be a conjugacy class of $G$. The number of unramified primes $p \leq x$ whose Frobenius conjugacy class $(p, L/\Q)=C$ is given by
	\[ \frac{\#C}{\#G}\Li(x)+ O\left(\frac{\#C}{\#G}x^{1/2}(\log|\Delta_L| + [L:\Q]\log{x})\right), \]
for all $x\geq 2$. Here $\Delta_L$ denotes the discriminant of $L$ and the $O$-constant is absolute.
\end{thm}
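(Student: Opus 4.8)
The plan is to reduce the count to an estimate for abelian $L$-functions over a subfield and then invoke GRH, following Lagarias and Odlyzko~\cite{LO77}; the precise shape stated here is the one extracted by Serre. Fix $g\in C$, let $H=\langle g\rangle\subseteq G$ be the cyclic subgroup it generates, and set $K=L^{H}$, so that $L/K$ is abelian with group $H$. Deuring's reduction expresses the (von Mangoldt--weighted) count $\psi_{C}(x)$ of unramified rational primes with Frobenius in $C$, up to a controlled discrepancy, as $\frac{1}{[C_{G}(g):H]}$ times the weighted count of degree-one primes $\mathfrak{p}$ of $K$ with $N\mathfrak{p}\le x$ and $(\mathfrak{p},L/K)=g$; since $\#C/\#G=1/\#C_{G}(g)$ and $\#H/\#C_{G}(g)=1/[C_{G}(g):H]$, the density $1/\#H$ of such $\mathfrak{p}$ combines with this factor to reproduce the main density $\#C/\#G$. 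The primes lost in the reduction --- those ramifying in $L$, those of residue degree $\ge 2$ over $\Q$, and higher prime powers --- contribute an amount comfortably absorbed by the asserted error term (one needs only crude estimates, e.g.\ that $\ll\log|\Delta_{L}|$ rational primes ramify in $L$).

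Next, since $\Gal(L/K)=H$ is abelian, orthogonality of characters gives the weighted count of $\mathfrak{p}$ with $(\mathfrak{p},L/K)=g$ as $\frac{1}{\#H}\sum_{\chi\in\widehat{H}}\bar{\chi}(g)\,\psi(x,\chi)$ plus a negligible error, where $\psi(x,\chi)=\sum_{N\mathfrak{n}\le x}\Lambda_{K}(\mathfrak{n})\chi(\mathfrak{n})$ and $L(s,\chi)$ is a Hecke $L$-function of $K$. The principal character yields $\psi(x,\chi_{0})=\psi_{K}(x)=x+(\text{error})$, the source of the main term $\tfrac{\#C}{\#G}\Li(x)$ after dividing by $\#H$, applying the Deuring factor, and using partial summation to pass from $x$ to $\Li(x)$. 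For $\chi\ne\chi_{0}$ the function $L(s,\chi)$ is entire and divides $\zeta_{L}$, so GRH for $\zeta_{L}$ places all its nontrivial zeros on $\mathrm{Re}\,s=\tfrac12$. Thus the whole problem reduces to bounding $\sum_{\chi\ne\chi_{0}}|\psi(x,\chi)|$ and $|\psi_{K}(x)-x|$ under GRH and then summing; the crucial arithmetic input for the summation is the conductor--discriminant formula $|\Delta_{L}|=|\Delta_{K}|^{\#H}\prod_{\chi}N\mathfrak{f}(\chi)$ together with $\#H\cdot[K:\Q]=[L:\Q]$, which convert $\sum_{\chi}\log\mathfrak{q}(\chi)$ into $\log|\Delta_{L}|+[L:\Q]\log x$ (with $\mathfrak{q}(\chi)\asymp|\Delta_{K}|\,N\mathfrak{f}(\chi)$ the analytic conductor).

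For the analytic step I would, for each nonprincipal $\chi$, apply a suitably smoothed explicit formula --- rather than the crude Perron truncation, whose error term $\ll x\log^{2}(\mathfrak{q}(\chi)x)/T$ is too lossy once summed over $\chi$ --- expressing $\psi(x,\chi)-\delta_{\chi}x$ as a rapidly convergent sum over the nontrivial zeros of $L(s,\chi)$ against a test function whose width is calibrated to $x^{1/2}$. The key point is that, as $\chi$ runs over $\widehat{H}$, the zeros of the $L(s,\chi)$ reassemble with multiplicity into the zeros of $\zeta_{L}$; hence the total $\sum_{\chi\ne\chi_{0}}|\psi(x,\chi)-\delta_{\chi}x|$ is controlled by the Riemann--von Mangoldt zero count for $\zeta_{L}$, which is $\ll\log|\Delta_{L}|+[L:\Q]\log x$ up to the few logarithmic factors introduced by the smoothing. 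Dividing by $\#H$, applying the Deuring factor $\#H/\#C_{G}(g)$, restoring the contributions from ramified primes and prime powers, and passing to the prime-counting function --- which exchanges main term $x$ for $\Li(x)$ --- then produces the asserted estimate, with the factor $\#C/\#G$ out front of the error. I expect the real difficulty to lie exactly here: choosing the smoothing so that the geometric-side error and the spectral-side bound both come out with the correct dependence uniformly in $L$; arranging that $\#C/\#G$ lands in front of the \emph{error} and not merely the main term; controlling the archimedean gamma factors so that $[L:\Q]\log x$ carries precisely the right coefficient; reconciling everything with $\Li(x)$ so that the bound holds uniformly down to $x=2$; and checking that all implied constants are absolute. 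This delicate packaging is the technical heart of~\cite{LO77}, in the streamlined form given by Serre~\cite[\S2.4]{serre81}.
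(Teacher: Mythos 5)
The paper does not prove Theorem \ref{thm:serre}; it is quoted directly from Serre \cite[\S 2.4]{serre81}, following Lagarias--Odlyzko \cite{LO77}, so there is no in-paper argument to compare against. Your outline follows the standard route of those references --- the Deuring reduction to the cyclic subfield $K=L^{\langle g\rangle}$, orthogonality over $\widehat{H}$, GRH applied to the Hecke $L$-functions dividing $\zeta_L$, and the conductor--discriminant formula together with $\#H\cdot[K:\Q]=[L:\Q]$ to assemble $\log|\Delta_L|+[L:\Q]\log x$ --- and your bookkeeping (the factor $1/[C_G(g):H]$, the identity $\#C/\#G=1/\#C_G(g)$, and how $\#C/\#G$ ends up in front of the error) is correct, with the uniform explicit-formula estimate only sketched, which is reasonable since that is precisely the technical content of the cited works.
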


To apply Theorem \ref{thm:serre}, we require an upper bound for the term $\log|\Delta_L|$. The following result, which is contained in \cite[Proposition 6]{serre81}, suffices for our applications.

\begin{lem}\label{lem:discbound} For every Galois extension $L/\Q$, we have
\[ \log|\Delta_L| \leq ([L:\Q]-1) \sum_{p \mid \Delta_L} \log{p} + [L:\Q] \log [L:\Q]. \]
\end{lem}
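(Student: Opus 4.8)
The plan is to reduce this estimate to a local computation via the different, which controls $\Delta_L$ prime by prime. Write $n = [L:\Q]$; we may assume $n \geq 2$, since for $L = \Q$ both sides vanish. Recall that $\Delta_L = \pm N_{L/\Q}(\mathfrak{d}_{L/\Q})$, where $\mathfrak{d}_{L/\Q}$ is the different of $L/\Q$, so that for each rational prime $p$,
\[ v_p(\Delta_L) = \sum_{\mathfrak{P} \mid p} f(\mathfrak{P}\mid p)\, d(\mathfrak{P}\mid p), \]
the sum being over the primes $\mathfrak{P}$ of $L$ above $p$, with $f(\mathfrak{P}\mid p)$ the residue degree and $d(\mathfrak{P}\mid p) = v_{\mathfrak{P}}(\mathfrak{d}_{L/\Q})$ the exponent of $\mathfrak{P}$ in the different. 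Since $L/\Q$ is Galois, the ramification index $e$ and residue degree $f$ are independent of the choice of $\mathfrak{P} \mid p$, and there are exactly $n/(ef)$ primes above $p$; hence $v_p(\Delta_L) = \tfrac{n}{e}\, d(\mathfrak{P}\mid p)$.

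Next I would bound the different exponent using the standard local estimate: if $e$ is the ramification index of $\mathfrak{P}$ over $p$, then
\[ d(\mathfrak{P}\mid p) \leq e - 1 + v_{\mathfrak{P}}(e) = e - 1 + e\, v_p(e), \]
with equality $d(\mathfrak{P}\mid p) = e - 1$ exactly when $p \nmid e$ (the tamely ramified case). This is a standard fact about local fields (see, e.g., Serre's \emph{Local Fields}, Chapter III). As only ramified primes divide $\Delta_L$, we have $e \geq 2$ whenever $p \mid \Delta_L$; since $e \mid n$, the integer $n/e$ then satisfies $1 \leq n/e \leq n/2$, and therefore
\[ v_p(\Delta_L) \;=\; \frac{n}{e}\, d(\mathfrak{P}\mid p) \;\leq\; \frac{n}{e}\bigl(e - 1 + e\, v_p(e)\bigr) \;=\; n - \frac{n}{e} + n\, v_p(e) \;\leq\; (n - 1) + n\, v_p(e). \]

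Finally I would multiply by $\log p$ and sum over the finitely many $p \mid \Delta_L$. The term $n - 1$ contributes $(n-1)\sum_{p \mid \Delta_L}\log p$, which is the first term on the right-hand side of the lemma. For the wildly ramified contribution, observe that $v_p(e) > 0$ forces $p \mid e$; as $L/\Q$ is Galois we have $e \mid n$, so $p \mid n$ and $v_p(e) \leq v_p(n)$. Hence
\[ \sum_{p \mid \Delta_L} v_p(e)\, \log p \;\leq\; \sum_{p \mid n} v_p(n)\, \log p \;=\; \log n, \]
and combining the displays gives exactly $\log|\Delta_L| \leq (n-1)\sum_{p \mid \Delta_L}\log p + n \log n$. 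The only substantive ingredient is the local bound on the different exponent in the wild case; the main point to keep in mind is that wild ramification occurs only at primes dividing $[L:\Q]$, which is precisely what confines the wild contribution to the term $n \log n$.
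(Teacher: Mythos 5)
Your proof is correct. Note that the paper does not prove this lemma at all: it simply quotes it from \cite[Proposition 6]{serre81}, and your argument is essentially the standard one underlying that result --- compute $\log|\Delta_L|$ locally via the different, use the bound $d(\mathfrak{P}\mid p)\le e-1+v_{\mathfrak{P}}(e)=e-1+e\,v_p(e)$, and observe that wild ramification forces $p\mid e\mid [L:\Q]$, so the wild contribution is at most $[L:\Q]\log[L:\Q]$. Each step checks out, including the two places where the Galois hypothesis is genuinely used (constancy of $e,f$ over $p$, giving $v_p(\Delta_L)=\tfrac{n}{e}\,d(\mathfrak{P}\mid p)$, and the divisibility $e\mid n$), so this is a complete self-contained proof of the cited inequality.
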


\section{Proof of Theorem \ref{thm:main}}\label{sec:proof}
\subsection{The Maynard--Tao strategy} We begin by recalling the strategy of \cite{maynard14} for producing bounded gaps between primes. Let $k \geq 2$ be a fixed positive integer, and let $\Hh = \{h_1 < h_2 < \dots < h_k\}$ denote a fixed \emph{admissible $k$-tuple}, i.e., a set of $k$ distinct integers that does not occupy all of the residue classes modulo $p$ for any prime $p$. With $N$ a large positive integer, we seek values of $n$ belonging to the dyadic interval $[N, 2N)$ for which the shifted tuple $n+h_1, n+h_2, \dots, n+h_k$ contains several primes.

Let $W := \prod_{p \leq \log\log\log{N}} p$. Choose an integer $\nu$ so that $\gcd(\nu+h_i,W)=1$ for all $1 \leq i\leq k$; the existence of such a $\nu$ is implied by the admissibility of $\Hh$. We restrict attention to integers $n \equiv \nu\pmod{W}$. This has the effect of pre-sieving the values of $n$ to ensure that none of the $n+h_i$ have any small prime factors. Let $w(n)$ denote nonnegative weights (to be chosen momentarily), and let $\chi_{\Pp}$ denote the characteristic function of the set $\Pp$ of prime numbers. One studies the sums
\[ S_1:= \sum_{\substack{N \leq n < 2N \\ n \equiv \nu\pmod{W}}} w(n)\quad\text{and}\quad S_2:= \sum_{\substack{N \leq n < 2N \\ n\equiv \nu\pmod{W}}} \left(\sum_{i=1}^{k} \chi_{\Pp}(n+h_i)\right)w(n). \]
The ratio $S_2/S_1$ is a weighted average of the number of primes among $n+h_1, \dots, n+h_k$, as $n$ ranges over $[N,2N)$. Consequently, if $S_2 > (m-1) S_1$ for the positive integer $m$, then at least $m$ of the numbers $n+h_1, \dots, n+h_k$ are primes. So if the inequality $S_2 > (m-1) S_1$ is achieved for a sequence of $n$ tending to infinity, then $\liminf (p_{n+m-1} - p_n) \leq h_k-h_1 < \infty$.

As we have described it so far, this strategy goes back to Goldston--Pintz--Y{\i}ld{\i}r{\i}m. The key innovation in the approach of Maynard--Tao is the choice of congenial weights $w(n)$. The following result, which is a restatement of \cite[Proposition 4.1]{maynard14}, is crucial.

\begin{prop}\label{prop:main-maynard} Let $\theta$ be a positive real number with $\theta < \frac{1}{4}$.
Let $F$ be a piecewise differentable function supported on the simplex $\{(x_1, \dots, x_k): \text{each }x_i \geq 0, \sum_{i=1}^{k} x_i \leq 1\}$. With $R:= N^{\theta}$, put
\[ \lambda_{d_1, \dots, d_k}:= \left(\prod_{i=1}^{k} \mu(d_i) d_i\right) \sum_{\substack{r_1, \dots, r_k \\ d_i \mid r_i\,\forall i \\ (r_i, W)=1\,\forall i}} \frac{\mu(\prod_{i=1}^{k} r_i)^2}{\prod_{i=1}^{k} \phi(r_i)} F\left(\frac{\log{r_1}}{\log{R}},  \dots, \frac{\log{r_k}}{\log{R}}\right)\]
whenever $\gcd(\prod_{i=1}^{k} d_i,W)=1$, and let $\lambda_{d_1, \dots, d_k}=0$ otherwise. Let
\[ w(n):= \left(\sum_{d_i \mid n+h_i\,\forall i} \lambda_{d_1, \dots, d_k}\right)^2. \]
Then as $N\to\infty$,
\begin{align*} S_1 &\sim \frac{\phi(W)^k}{W^{k+1}} N (\log{R})^k I_k(F), ~~\text{and} \\
S_2 &\sim \frac{\phi(W)^k}{W^{k+1}} \frac{N}{\log{N}} (\log{R})^{k+1} \sum_{m=1}^{k} J_k^{(m)}(F), \end{align*}
provided that $I_k(F) \neq 0$ and $J_k^{(m)}(F) \neq 0$ for each $m$, where
\begin{align*} I_k(F) :&= \idotsint_{[0,1]^{k}} F(t_1, \dots, t_k)^2\, \diff t_1 \diff t_2 \cdots \diff t_k, \\
	J_k^{(m)}(F):&= \idotsint_{[0,1]^{k-1}} \left(\int_{0}^{1} F(t_1, \dots, t_k)\, \diff t_m\right)^2 \diff t_1 \cdots  \diff t_{m-1} \diff t_{m+1} \cdots \diff t_k.
\end{align*}
\end{prop}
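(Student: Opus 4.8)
The plan is to recognize that this proposition is, up to inessential changes of notation, exactly \cite[Proposition 4.1]{maynard14}, and to recall the structure of its proof. There are two computations, one for $S_1$ and one for $S_2$, and in each the strategy is the same: expand the weight $w(n)$, carry out the inner summation over $n$ to produce a main term plus an acceptable error, and then evaluate the resulting multiple sum over the $d_i$ and $e_i$ by a change of variables that diagonalizes it.

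For $S_1$, I would first expand the square and interchange the order of summation, writing
\[ S_1 = \sum_{\substack{d_1,\dots,d_k \\ e_1,\dots,e_k}} \lambda_{d_1,\dots,d_k}\,\lambda_{e_1,\dots,e_k} \sum_{\substack{N \le n < 2N \\ n\equiv\nu\pmod{W} \\ [d_i,e_i]\mid n+h_i\ \forall\,i}} 1. \]
The support conditions built into the $\lambda$'s (squarefreeness of $\prod_i r_i$ and coprimality of each $r_i$ to $W$) force the moduli $W, [d_1,e_1], \dots, [d_k,e_k]$ to be pairwise coprime, so by the Chinese Remainder Theorem the inner count equals $\frac{N}{W\prod_i [d_i,e_i]} + O(1)$. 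Since $\lambda_{d_1,\dots,d_k}$ vanishes unless $\prod_i d_i \le R$ (because $F$ is supported on the simplex), one has $\prod_i [d_i,e_i] \le R^2 = N^{2\theta}$ with $2\theta < \tfrac12$; crude bounds on the number of surviving tuples and on the sizes of the $\lambda$'s then show that the accumulated $O(1)$ errors contribute $o(N)$. This reduces matters to the main term $\frac{N}{W}\sum_{d_i,e_i}\frac{\lambda_{d_1,\dots,d_k}\lambda_{e_1,\dots,e_k}}{\prod_i [d_i,e_i]}$. Following Maynard, I would then pass from the $\lambda$'s to new variables $y_{r_1,\dots,r_k}$ which, for the $\lambda$'s defined in the statement, are essentially $F\bigl(\frac{\log r_1}{\log R},\dots,\frac{\log r_k}{\log R}\bigr)$ on the relevant range of $r$'s; the crucial point is the arithmetic identity showing that this substitution diagonalizes the quadratic form, expressing it (up to a negligible error) as an explicit constant times $\sum_{r_1,\dots,r_k}\frac{y_{r_1,\dots,r_k}^2}{\prod_i \phi(r_i)}$. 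A routine estimate of this diagonal sum — each of the $k$ sums over $r_i$ contributing a factor of order $\log R$ via Mertens-type asymptotics — then yields $S_1 \sim \frac{\phi(W)^k}{W^{k+1}} N(\log R)^k I_k(F)$.

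For $S_2$, I would treat each term $S_2^{(m)} := \sum_n \chi_{\Pp}(n+h_m)\,w(n)$ separately. When $n+h_m$ is prime it exceeds $R$, so only the terms with $d_m = e_m = 1$ survive; after expanding $w(n)$ one is left to count primes $n+h_m$ lying in a single residue class modulo $W\prod_{i\ne m}[d_i,e_i]$ in a dyadic window. Replacing this count by its expectation $\frac{1}{\phi(W\prod_{i\ne m}[d_i,e_i])}\cdot\frac{N}{\log N}$ incurs an error that is small on average over the moduli by the Bombieri--Vinogradov theorem; this is the one point where the hypothesis $\theta < \tfrac14$ is genuinely needed, since the moduli can be as large as $WR^2 = N^{2\theta+o(1)}$ and Bombieri--Vinogradov has level $\tfrac12$. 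The main term is then a quadratic form in the $y$'s with the $m$-th index summed out, which the same diagonalization converts into $\frac{\phi(W)^k}{W^{k+1}}\cdot\frac{N}{\log N}(\log R)^{k+1} J_k^{(m)}(F)$; summing over $m = 1,\dots,k$ gives the asserted formula for $S_2$.

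I expect the genuinely delicate step to be the diagonalization of the two quadratic forms: verifying that the linear change of variables from the $\lambda$'s to the $y$'s is invertible, that it truly decouples the form, and — most importantly — that the various error terms generated along the way (from replacing $r_i$ by $\phi(r_i)$ on prime moduli, from truncating Euler products, and from the $o(N)$ contributions mentioned above) are each negligible relative to the main term, uniformly in the relevant ranges. All of this is carried out in detail in \cite{maynard14}; since the hypotheses here — the choice of $W$, the dyadic range $[N,2N)$, and the restriction $n\equiv\nu\pmod{W}$ — match Maynard's setup exactly, the argument transfers with no essential change.
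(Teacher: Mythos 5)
Your proposal is correct and follows the same route as the paper, which proves nothing here itself but simply quotes this as a restatement of Maynard's Proposition 4.1; your sketch (expansion of $w(n)$, CRT evaluation of the inner sums, Bombieri--Vinogradov for the prime terms --- the genuine reason for $\theta<\tfrac14$ --- and the diagonalizing change of variables to the $y_{r_1,\dots,r_k}$) is a faithful summary of Maynard's argument. No gaps worth flagging.
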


 From our interpretation of $S_2/S_1$ as a weighted average, we know that there is an $n \in [N,2N)$ for which at least $S_2/S_1$ of the numbers $n+h_1, \dots, n+h_k$ are prime. Proposition \ref{prop:main-maynard} shows that $S_2/S_1 \to \theta \frac{\sum_{m=1}^{k} J_k^{(m)}(F)}{I_k(F)}$, as $N\to\infty$. Let
\begin{equation}\label{eq:mkdef}M_k := \sup_{F} \frac{\sum_{m=1}^{k} J_k^{(m)}(F)}{I_k(F)},\end{equation} where the supremum is taken over all $F$ satisfying the previously indicated conditions. Upon choosing $\theta$ close to $\frac{1}{4}$, and $F$ so that the supremum appearing in the definition \eqref{eq:mkdef} is close to $M_k$, we find that infinitely often, at least $\lceil \frac{1}{4}M_k\rceil$ of the numbers $n+h_1,\dots, n+h_{k}$ are prime. The following lower bound on $M_k$ is due to Maynard \cite[Proposition 4.3]{maynard14}.

\begin{prop}\label{prop:mklower} $M_k\to\infty$ as $k\to\infty$. In fact, for all sufficiently large values of $k$,
	\[ M_k > \log{k}-2\log\log{k}-2.\]
\end{prop}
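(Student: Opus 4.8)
The strategy is to exhibit a single test function $F$ for which the ratio $\bigl(\sum_{m=1}^{k} J_k^{(m)}(F)\bigr)/I_k(F)$ is of size $\log{k}$; following Maynard, I would take $F$ of truncated product type,
\[ F(t_1, \dots, t_k) = \prod_{i=1}^{k} g(k t_i), \]
understood to vanish off the simplex, where $g\colon[0,\infty)\to[0,\infty)$ vanishes outside $[0,T]$ and equals $(1+At)^{-1}$ on $[0,T]$. Here $A$ and $T$ are parameters, both of which must be permitted to grow with $k$ (a short calculation rules out bounded $A$), to be calibrated only at the very end. Such an $F$ is piecewise differentiable and makes $I_k(F)$ and every $J_k^{(m)}(F)$ strictly positive, so Proposition \ref{prop:main-maynard} applies and $M_k$ (as in \eqref{eq:mkdef}) is at least $\bigl(\sum_{m=1}^{k} J_k^{(m)}(F)\bigr)/I_k(F)$. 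I would record the elementary one-variable integrals
\[ I_1 := \int_0^T g(t)^2\,\diff t = \frac{T}{1+AT}, \qquad L_1 := \int_0^T g(t)\,\diff t = \frac{\log(1+AT)}{A}, \]
along with $I_2 := \int_0^T t\,g(t)^2\,\diff t$; everything below is ultimately expressed through $A$ and $T$.

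The bound for $I_k(F)$ is the easy half: discarding the simplex constraint only enlarges the domain of integration, so the substitution $s_i = k t_i$ gives $I_k(F) \le k^{-k} I_1^{k}$. The work lies in a matching lower bound for $\sum_{m} J_k^{(m)}(F) = k\,J_k^{(1)}(F)$ (the equality by symmetry of $F$), where the simplex constraint genuinely interferes with the inner integral. I would restrict the outer integral to the region where $t_2,\dots,t_k \in [0,T/k]$ and $t_2 + \dots + t_k \le 1 - T/k$; there the simplex constraint $t_1 \le 1 - \sum_{i\ge 2} t_i$ is automatic once $t_1 \le T/k$, so $\int_0^1 F\,\diff t_1 = \frac{L_1}{k}\prod_{i\ge 2} g(k t_i)$ exactly. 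Rescaling once more yields
\[ J_k^{(1)}(F) \ \ge\ \frac{L_1^2}{k^{k+1}}\,I_1^{k-1}\Bigl(1 - \Prob\Bigl[\,{\textstyle\sum_{i=2}^{k}} X_i > k - T\,\Bigr]\Bigr), \]
where $X_2,\dots,X_k$ are i.i.d.\ with density $g(s)^2/I_1$ on $[0,T]$. Combining the two estimates, $M_k \ge (1-o(1))\,L_1^2/I_1$ as soon as that tail probability tends to $0$.

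The crux --- and the step I expect to be the main obstacle --- is precisely to make that tail probability negligible, and this is what dictates the choice of $A$ and $T$. One has $\E[X_i] = I_2/I_1$, which is $\frac{1}{A}\log(AT)$ up to lower order, and the aim is to keep $(k-1)\E[X_i]$ below the threshold $k - T$ by a margin growing with $k$; Chebyshev's inequality, fed the crude bound $\operatorname{Var}(X_i) \le \frac{1}{I_1}\int_0^T s^2 g(s)^2\,\diff s = O(T/A)$, then closes the argument. Concretely, taking $A = \log{k}$ and $AT = k^{1-\delta}$ with $\delta = \delta(k) := \frac{\log\log{k}}{\log{k}}$, one verifies that $T = o(k)$, that $\E[X_i] = 1 - \delta + o(1)$ so that $k - T - (k-1)\E[X_i] \asymp k\delta \to \infty$, and hence that the Chebyshev bound is $O\bigl(k^{-\delta}(\log\log{k})^{-2}\bigr) \to 0$. (Keeping $\E[X_i]$ a fixed distance below $1$ while still forcing $\log(AT)\sim\log{k}$ is the only real balancing act; other nearby calibrations work equally well.)

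To finish, I would substitute the closed forms into $L_1^2/I_1 = \dfrac{(1+AT)\,(\log(1+AT))^2}{A^2 T}$, which for the above choice equals $(1-\delta)^2\log{k} + o(1) = \log{k} - 2\log\log{k} + o(1)$. Absorbing the $(1-o(1))$ factor from the previous paragraph, this gives $M_k > \log{k} - 2\log\log{k} - 2$ for all sufficiently large $k$, which in particular forces $M_k \to \infty$.
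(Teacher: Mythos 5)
Your proposal is correct, but note that the paper itself gives no proof of Proposition \ref{prop:mklower}: it quotes the bound directly from \cite[Proposition 4.3]{maynard14}. Your argument is essentially Maynard's own proof of that proposition --- the same truncated test function $F(t_1,\dots,t_k)=\prod_{i}g(kt_i)$ with $g(t)=(1+At)^{-1}$ on $[0,T]$, the same upper bound $I_k(F)\le k^{-k}\bigl(\int_0^T g^2\bigr)^k$ obtained by dropping the simplex constraint, and the same restriction-plus-Chebyshev lower bound for $\sum_m J_k^{(m)}(F)$ --- differing only in the calibration ($A=\log k$, $AT=k/\log k$, versus Maynard's $A=\log k-2\log\log k$, $1+AT=e^{A}$), and your choice does close the argument since the tail probability you compute is $O\bigl(1/(\log k(\log\log k)^2)\bigr)$, so the loss it causes is $o(1)$ even after multiplication by the main term of size $\log k$.
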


Consequently, once $k$ is a little larger than $e^{4m}$, we have $\lceil \frac{1}{4} M_k\rceil > m-1$. From the above discussion, $\liminf_{n\to\infty} (p_{n+m-1}-p_n) \leq h_k-h_1 < \infty$ for every admissible $k$-tuple $\Hh$. Choosing $\Hh$ carefully, this argument gives $\liminf_{n\to\infty} (p_{n+m-1}-p_n) \ll m^3 e^{4m}$; see the proof of \cite[Theorem 1.1]{maynard14} for details.

\subsection{Modifying Maynard--Tao} For the rest of the paper, we fix an integer $g \neq -1$ that is not a square. Let $\tilde{\Pp}$ denote the set of primes having $g$ as a primitive root.
Fix an integer $k \geq 2$, and let
\[ K:= 9k^2 \cdot 4^k. \]
We let $\Hh$ denote the admissible $k$-tuple with $h_i = (i-1)K!$ for all $1 \leq i\leq k$; that is,
\begin{equation}\label{eq:hidef} \Hh:= \{0, K!, 2K!, \dots, (k-1)K!\}. \end{equation}
In what follows, we think of $N$ as very large, in particular much larger than $g$. We use the Maynard--Tao strategy to detect $n \in [N,2N)$ for which the list $n+h_1, \dots, n+h_k$ contains several primes belonging to $\tilde{\Pp}$. Let $g_0$ denote the discriminant of the quadratic field $\Q(\sqrt{g})$. Set
\[ W := \lcm[g_0, \prod_{p \leq \log\log\log{N}} p]. \]
Once again, we pre-sieve values of $n$ by putting $n$ in an appropriate residue class $\nu \bmod W$. Whereas
Maynard could choose any $\nu$ with $\gcd(\nu+h_i,W)=1$ for all $1 \leq i \leq k$, we must tread more carefully. We choose $\nu$ so that the primes detected by the sieve are heavily biased towards having $g$ as a primitive root.

\begin{lem}\label{lem:presieving} We can choose an integer $\nu$ with all of the following properties:
\begin{enumerate}
\item[(i)] $\nu+h_i$ is coprime to $W$ for all $1\leq i \leq k$,
\item[(ii)] $\nu+h_i-1$ is coprime to $\prod_{2 < p \leq \log\log\log{N}} p$ for all $1\leq i \leq k$,
\item[(iii)] The Kronecker symbol $\leg{g_0}{\nu+h_i} = -1$ for all $1 \leq i \leq k$.
\end{enumerate}
\end{lem}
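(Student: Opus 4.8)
The plan is to construct $\nu$ by the Chinese Remainder Theorem, splitting $W$ into the prime power $2^a \| W$ (which divides $g_0$, since $g_0$ is a fundamental discriminant and hence $\equiv 0, 1 \pmod 4$), the odd part of $g_0$, and the remaining odd primes $2 < p \leq \log\log\log N$ not dividing $g_0$. On each prime (power) I must pin down the residue class of $\nu$, and then check that conditions (i)--(iii) can be met simultaneously. The key observation is that conditions (i) and (ii) only constrain $\nu$ modulo primes $p$ up to $\log\log\log N$ (and modulo the odd primes dividing $g_0$), whereas condition (iii), by quadratic reciprocity applied to the Kronecker symbol, reduces to a condition on $\nu + h_i$ modulo the primes dividing $g_0$ together with modulo $8$. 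So the two families of conditions interact only at the primes dividing $g_0$ and at the prime $2$, and those are precisely the places where we have to work.

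First I would handle the odd primes $2 < p \leq \log\log\log N$ with $p \nmid g_0$: here only (i) and (ii) apply, and they say that $\nu$ should avoid the residue classes $-h_i$ and $1 - h_i$ modulo $p$. Since the $h_i = (i-1)K!$ are all $\equiv 0 \pmod{p}$ for every such $p$ (because $p \leq \log\log\log N \ll K$, so $p \mid K!$), conditions (i) and (ii) collapse to the single requirement $\nu \not\equiv 0, 1 \pmod p$, which is satisfiable as long as $p \geq 3$. Next, for the odd primes $p \mid g_0$: again $p \mid K!$ (as $g_0 \ll N$ is fixed and $p$ is... wait—$p$ could be large; but $p \mid g_0$ and we only need $p \nmid \nu+h_i$, and here too, since all $h_i \equiv 0 \pmod{p}$ when $p \le K$, or more carefully, since the $h_i$ are multiples of $K!$ we can still arrange it). Here I want $\nu + h_i$ coprime to $p$ (to help with (i) if $p \mid W$) and I want to control $\leg{\cdot}{p}$; since all $h_i$ are multiples of $K!$ and hence of $p$ when $p\le K$, while if $p>K$ the values $h_i$ are distinct mod $p$ but still I have freedom — in all cases I pick $\nu$ in a fixed nonzero class mod $p$ making $\leg{\nu}{p}$ whatever I need. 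Finally at $p = 2$: here I choose $\nu \bmod 8$. Since all $h_i \equiv 0 \pmod 8$ (as $8 \mid K!$), conditions (i), (ii) at $p=2$ and the $2$-part of the reciprocity computation for (iii) all become conditions on $\nu \bmod 8$ alone, and a short case check shows the class $\nu \equiv$ (appropriate odd residue) mod $8$ works.

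The main technical point — and the step I expect to require the most care — is verifying that condition (iii) is genuinely a congruence condition on $\nu$ modulo $g_0$ and modulo $8$, so that it is compatible with (i) and (ii). This is where one uses that $g_0$ is a fundamental discriminant: the Kronecker symbol $\leg{g_0}{\cdot}$ is then a primitive real Dirichlet character modulo $|g_0|$, so $\leg{g_0}{\nu+h_i}$ depends only on $\nu + h_i \bmod |g_0|$, hence (since $8 \mid K!$ divides every $h_i$, and $g_0 \mid W$) only on $\nu \bmod |g_0|$. One then checks that the character does not vanish at the chosen residue class (i.e. $\gcd(\nu+h_i, g_0) = 1$, which is subsumed in (i) since $g_0 \mid W$), and that its value can be made $-1$: since $\leg{g_0}{\cdot}$ is nontrivial, it takes the value $-1$ somewhere on the units mod $|g_0|$, and because the $h_i$ are all divisible by $|g_0|$ — no: rather, because $\leg{g_0}{\cdot}$ is a character, fixing $\nu$ in the right class mod $|g_0|$ forces $\leg{g_0}{\nu + h_i}$ to equal $\leg{g_0}{\nu}$ times $\leg{g_0}{1 + h_i/\nu}$, and here I use that $|g_0|$ divides $K!$ hence divides each $h_i$, so $\nu + h_i \equiv \nu \pmod{|g_0|}$ and the symbol is literally $\leg{g_0}{\nu}$ for all $i$ at once. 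Thus a single choice of $\nu$ modulo $|g_0|$ with $\leg{g_0}{\nu} = -1$ handles all $k$ conditions in (iii) simultaneously. Assembling these local choices via CRT (noting $|g_0|$ and the product of the small primes are not coprime only at primes dividing $g_0$, where the choices were made consistently) produces the desired $\nu$, completing the proof.
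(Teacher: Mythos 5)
There is a genuine gap, and it sits exactly at the point your write-up flags as ``the main technical point.'' Your argument for (iii) rests on the claim that $|g_0|$ divides $K!$, so that $\nu+h_i\equiv \nu \pmod{|g_0|}$ for every $i$ and a single choice of $\nu \bmod |g_0|$ with $\leg{g_0}{\nu}=-1$ settles all $k$ conditions at once. But $K=9k^2\cdot 4^k$ is fixed in terms of $k$ alone (it must be: the final constant $C_m=(k-1)K!$ is required to be independent of $g$), while $g$ is an arbitrary fixed integer, so $g_0$ may well have a prime divisor $p>K$. For such a $p$ the shifts $h_i=(i-1)K!$ are pairwise \emph{incongruent} modulo $p$ (each difference $(j-i)K!$ has all prime factors at most $K$), so $\leg{g_0}{\nu+h_1},\dots,\leg{g_0}{\nu+h_k}$ are $k$ genuinely distinct quadratic conditions modulo $p$, not a single one. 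Your fallback remark for odd $p\mid g_0$ with $p>K$ --- ``pick $\nu$ in a fixed nonzero class mod $p$ making $\leg{\nu}{p}$ whatever I need'' --- does not address this: fixing $\leg{\nu}{p}$ fixes only one of the $k$ symbols $\leg{\nu+h_i}{p}$, and there is no a priori reason a residue class exists in which all $k$ symbols take prescribed values (while also avoiding the classes forced by (i) and (ii)).

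This is precisely where the paper has to do real work: after factoring $g_0$ into prime discriminants $D_1\cdots D_\ell$, the case in which some $|D_j|>K$ is handled by Lemma \ref{lem:quadres} (a consequence of the Weil bound, Lemma \ref{lem:weil}), which shows that the number of $\nu_2 \bmod{D_1}$ with $\leg{\nu_2+h_i}{|D_1|}$ equal to arbitrarily prescribed signs for all $i$ is at least $|D_1|/2^k-(k-1)\sqrt{|D_1|}-k$; the specific choice $K=9k^2\cdot 4^k$ is made exactly so that this count exceeds $k$, leaving room to also avoid the classes $1-h_1,\dots,1-h_k$ needed for (ii). Your treatment of the small-discriminant situation (all prime discriminant factors of modulus at most $K$) is essentially the paper's Case I and is fine, as is your reduction via CRT and the observation that the Kronecker symbol is a character modulo $|g_0|$; but without the character-sum input your proof does not cover general $g$, and the lemma as stated (with $\Hh$ independent of $g$) is not established.
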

\begin{proof} Factor $g_0$ as a product $D_1 D_2 \dots D_\ell$ of coprime prime discriminants, where the \emph{prime discriminants} are the numbers $-4, -8, 8$, and $(-1)^{\frac{p-1}{2}} p$ for odd primes $p$. Reordering the factorization if necessary, we can assume all of the following:
\begin{itemize}
\item If all $|D_i| \leq K$ and $g_0$ is even, then $D_1 \in \{-4, 8, 8\}$.
\item If all $|D_i| \leq K$, $g_0$ is odd, and $\ell > 1$, then $|D_1| \geq 5$.
\item If some $|D_i| > K$, then $|D_1| > K$.
\end{itemize}
We begin by choosing any odd integer $\nu_1$ that avoids the residue classes $-h_1, \dots, -h_k$, $1-h_1$, $\dots, 1-h_k$ modulo $p$ for each odd prime $p \leq \log\log\log{N}$ not dividing $D_1$. Note that when $p \leq K$, the only requirement on $\nu_1$ is that it avoids the residue classes $0$ and $1$ mod $p$, while when $p > K$, we are to avoid at most $2k$ of the $p > K > 2k$ residue classes modulo $p$. So such a choice of $\nu_1$ certainly exists by the Chinese remainder theorem. We choose $\nu$ to satisfy
\[ \nu \equiv \nu_1 \pmod{[W/D_1, 2]}. \]
To ensure (i), (ii), and (iii), it suffices to impose a further condition on $\nu$ guaranteeing \
\begin{enumerate}
\item[(i$'$)] $\nu+h_i$ is coprime to all odd $p$ dividing $D_1$ for all $1 \leq i \leq k$,
\item[(ii$'$)] $\nu+h_i-1$ is coprime to all odd $p$ dividing $D_1$ for all $1 \leq i \leq k$,
\item[(iii$'$)] $\leg{D_1}{\nu+h_i} = -\leg{D_2 \cdots D_l}{\nu_1+h_i}$ for all $1 \leq i \leq k$.
\end{enumerate}
Notice that for all $1 \leq i \leq k$, we have $\leg{D_2 \cdots D_l}{\nu_1+h_i} \neq 0$ by the choice of $\nu_1$.

\subsubsection*{Case I: All $|D_i| \leq K$.} In this case, (i$'$) and (ii$'$) are satisfied as long as $\nu \not\equiv 0\text{ or }1\pmod{p}$ for any odd $p$ dividing $D_1$, while (iii$'$) is satisfied as long as
\[ \leg{D_1}{\nu} = -\leg{D_2 \cdots D_l}{\nu_1}. \]

Assume first that $g_0$ is even. Then $D_1 \in \{-4, -8, 8\}$ and (i$'$) and (ii$'$) hold vacuously. Choose $\nu_2$ so that $\leg{D_1}{\nu_2} = -\leg{D_2 \cdots D_l}{\nu_1}$. We ensure (iii$'$) by selecting $\nu$ as any solution to the simultaneous congruences
\begin{equation}\label{eq:simultaneous} \nu \equiv \nu_1 \pmod{[W/D_1, 2]} \quad\text{and}\quad \nu \equiv \nu_2 \pmod{D_1}.\end{equation}
While the moduli here share a factor of $2$, it is clear that these congruences still admit a simultaneous solution, since the only $2$-adic information encoded by the first congruence is that $\nu$ is odd, which is certainly compatible with the second!

Now assume instead that $g_0$ is odd, so that $|D_1|$ is an odd prime. Either $|D_1|=3$ and $\ell=1$, or $|D_1| \geq 5$. If the former, then (i$'$), (ii$'$), and (iii$'$) hold upon selecting $\nu_2=2$ and choosing $\nu$ to satisfy \eqref{eq:simultaneous}. If the latter, choose $\nu_2 \not\equiv 1\pmod{D_1}$ with
$\leg{D_1}{\nu_2} = -\leg{D_2 \cdots D_l}{\nu_1}$; this is possible since that equality of Legendre symbols holds for a total of $\frac{|D_1|-1}{2} > 1$ residue classes $\nu_2 \bmod{D_1}$. Once again, choosing $\nu$ to satisfy \eqref{eq:simultaneous} completes the proof.

\subsubsection*{Case II: Some $|D_i| > K$.} In this case, $|D_1| > K$. Since $K >8$, we see that $|D_1|$ is an odd prime. To satisfy (i$'$), (ii$'$), and (iii$'$), it suffices to show that there is an integer $\nu_2 \not\equiv  1-h_1, \dots, 1-h_k \pmod{D_1}$ with
\begin{equation}\label{eq:quadressystem} \leg{\nu_2+h_i}{|D_1|} = -\leg{D_2 \cdots D_l}{\nu_1+h_i} \quad\text{for all $1 \leq i \leq k$}, \end{equation}
for in that case we can choose $\nu$ as any solution to \eqref{eq:simultaneous}. (We used here that $\leg{D_1}{\nu+h_i}=\leg{\nu+h_i}{|D_1|}$.) The integers $h_1, \dots, h_k$ are incongruent modulo $D_1$, as each nonzero difference $h_j - h_i = (j-i)K!$ has only prime factors smaller than $K$. So Lemma \ref{lem:quadres} gives that the number of  $\nu_2\bmod{D_1}$ satisfying \eqref{eq:quadressystem} is at least $|D_1|/2^k - (k-1)\sqrt{|D_1|} - k$. Since $|D_1| > K = 9k^2 \cdot 4^k$, this count of solutions exceeds $k$. In particular, we can satisfy \eqref{eq:quadressystem} with $\nu_2 \not\equiv  1-h_1, \dots, 1-h_k \pmod{D_1}$.
\end{proof}
	
Assume that $\nu$ has been chosen to to satisfy the conditions of Lemma \ref{lem:presieving}. We let $R=N^{\theta}$, with $\theta$ to be specified momentarily, and we define the weights $w(n)$ exactly as in the statement of Proposition \ref{prop:main-maynard}. We let \[ \tilde{S}_{1}:= \sum_{\substack{N \leq n < 2N \\ n \equiv \nu\pmod{W}}} w(n)\quad\text{and}\quad \tilde{S}_{2}:= \sum_{\substack{N \leq n < 2N \\ n\equiv \nu\pmod{W}}} \left(\sum_{i=1}^{k} \chi_{\tilde{\Pp}}(n+h_i)\right)w(n). \] Theorem \ref{thm:main} is a consequence of the following result, established in the next section.

\begin{prop}[assuming GRH]\label{prop:main} Fix a positive real number $\theta < \frac{1}{4}$. As $N\to\infty$, we have the same asymptotic estimates for $\tilde{S}_1$ and $\tilde{S}_2$ as those for $S_1$ and $S_2$ given in Proposition \ref{prop:main-maynard}.
\end{prop}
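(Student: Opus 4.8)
The plan is to run Hooley's analysis of Artin's conjecture inside the Maynard--Tao machine. Write $S_1,S_2$ for the quantities of Proposition~\ref{prop:main-maynard} formed with \emph{our} modulus $W=\lcm[g_0,\prod_{p\le\log\log\log N}p]$ and the class $\nu$ of Lemma~\ref{lem:presieving}. Then $\tilde S_1=S_1$ verbatim, and since $W$ differs from Maynard's choice only by the fixed factor $g_0$, the argument proving \cite[Proposition~4.1]{maynard14} yields the asserted estimate for $\tilde S_1$ with no essential change. So it remains to treat $\tilde S_2=\sum_{i=1}^k\tilde S_2^{(i)}$, where $\tilde S_2^{(i)}=\sum_{N\le n<2N,\ n\equiv\nu\,(W)}\chi_{\tilde\Pp}(n+h_i)w(n)$; here GRH will enter only through Theorem~\ref{thm:serre}, applied \emph{pointwise} (no Bombieri--Vinogradov input), which is legitimate precisely because $\theta<\tfrac14$.

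Fix $i$. For a prime $q\nmid g$, $g$ is a primitive root modulo $q$ exactly when no prime divides the index $[(\Z/q\Z)^\times:\langle g\rangle]$, and for squarefree $d$ the condition $d\mid[(\Z/q\Z)^\times:\langle g\rangle]$ is equivalent to $q$ splitting completely in $L_d:=\Q(\zeta_d,g^{1/d})$. Möbius inversion therefore gives
\[ \chi_{\tilde\Pp}(q)=\sum_{d}\mu(d)\,\mathbf 1[\,q\text{ is a prime splitting completely in }L_d\,], \]
so $\tilde S_2^{(i)}=\sum_d\mu(d)T_d$ with $T_d:=\sum_{N\le n<2N,\ n\equiv\nu\,(W)}\mathbf 1[\,n+h_i\text{ splits completely in }L_d\,]\,w(n)$. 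The point of Lemma~\ref{lem:presieving} now becomes visible: condition (iii) forces $\leg{g_0}{n+h_i}=-1$, hence $g$ is a quadratic nonresidue modulo $n+h_i$ and $2$ does not divide its index, while condition (ii) forces every odd prime up to $\log\log\log N$ to be coprime to $(n+h_i)-1$, hence to the index. Consequently $T_d=0$ unless $d=1$ or $d$ is odd, squarefree, with every prime factor exceeding $\log\log\log N$; in particular every $d>1$ that occurs has $d>\log\log\log N$. (This is exactly the mechanism forcing the conditional density of ``$g$ a primitive root'' given ``$n+h_i$ prime'' to tend to $1$.) The term $d=1$ is precisely the $i$-th summand of Maynard's $S_2$, so $T_1\sim\frac{\phi(W)^k}{W^{k+1}}\frac{N}{\log N}(\log R)^{k+1}J_k^{(i)}(F)$ by Proposition~\ref{prop:main-maynard}; it remains to prove $\sum_{d>1}|T_d|=o(T_1)$.

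I would split the surviving $d>1$ into three ranges. In the \emph{small range} $1<d\le N^{\beta}$ (with $\beta>0$ small in terms of $\theta$) one expands $w(n)$ into the $\lambda$-variables; because $n+h_i$ is prime and all $d_i',d_i''\le R<N$, only the terms with $d_i'=d_i''=1$ survive, and the inner sum is a count of primes $p\in[N+h_i,2N+h_i)$ lying in a fixed residue class modulo $W\prod_{j\ne i}[d_j',d_j'']$ and splitting completely in $L_d$ --- a Frobenius count in the compositum of $L_d$ with a cyclotomic field. For $N$ large these fields are linearly disjoint over $\Q$ (the maximal abelian subfield of $L_d$ is $\Q(\zeta_d)$, and $d$ is coprime to $W$ and, whenever the contribution is nonzero, to $\prod_{j\ne i}[d_j',d_j'']$), so the relevant class is a single element; Theorem~\ref{thm:serre} and Lemma~\ref{lem:discbound} then evaluate this count as $\tfrac1{d\phi(d)}$ times the count for $d=1$, with error $O(N^{1/2}\log N)$ per choice of $(d_j',d_j'')_{j\ne i}$ --- the key being that the prefactor $\#C/\#G$ cancels $[L:\Q]$ while $\log|\Delta_L|\ll[L:\Q]\log N$. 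Summing the main parts costs the harmless factor $\sum_{d>\log\log\log N}\mu^2(d)/(d\phi(d))=o(1)$, and summing the errors costs $\ll N^{\beta}\cdot N^{1/2+2\theta}(\log N)^{O(k)}$, which is $o(\tilde S_1)$ once $\beta<\tfrac12-2\theta$ (so $\theta<\tfrac14$ leaves room for $\beta$). In the \emph{large range} $d>N^{1/2}(\log N)^A$ (with $A=A(k)$ large), $d\mid[(\Z/(n+h_i)\Z)^\times:\langle g\rangle]$ forces $g$ to have multiplicative order $<2N^{1/2}(\log N)^{-A}$ modulo the prime $n+h_i$; the elementary estimate ``$\#\{q:\mathrm{ord}_q(g)<t\}\ll_g t^2$'' (each $q$ with $\mathrm{ord}_q(g)=s$ divides $g^s-1$, which has $O_g(s)$ prime factors) shows only $O(N(\log N)^{-2A})$ primes $n+h_i$ occur, so Cauchy--Schwarz together with a fourth-moment bound $\sum_n w(n)^2\ll N(\log N)^{O(k)}$ (itself valid because $\theta<\tfrac14$) makes this range $o(T_1)$.

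The \emph{intermediate range} $N^{\beta}<d\le N^{1/2}(\log N)^A$ is the heart of the argument and I expect it to be the main obstacle: here Theorem~\ref{thm:serre} cannot be afforded term by term (the accumulated error would swamp $\tilde S_1$), yet the order of $g$ modulo $n+h_i$ is not small enough for the counting bound of the previous paragraph, and one cannot discard the $d$-th power residue condition in favour of the bare congruence $n+h_i\equiv1\pmod d$ (that loses a factor $\log N$). One must instead adapt Hooley's treatment of his intermediate ranges of moduli --- exploiting that $n+h_i$ splits completely in $L_d$ (not merely in $\Q(\zeta_d)$), and combining Brun--Titchmarsh with the order-counting estimate --- now carried out with the sieve weights $w(n)$ present, which one controls through the pointwise bound $w(n)\ll N^{o(1)}$ and Cauchy--Schwarz against the fourth-moment bound above. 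Making this range $o(\tilde S_1)$ is what fixes the admissible size of $\beta$, and hence dovetails with $\theta<\tfrac14$. Once all three ranges are shown to be $o(T_1)$ we obtain $\tilde S_2^{(i)}\sim T_1$, and summing over $i=1,\dots,k$ gives the asserted asymptotic for $\tilde S_2$.
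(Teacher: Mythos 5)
Your reduction to $\tilde S_2^{(i)}$, your use of Lemma \ref{lem:presieving} to kill every modulus with a prime factor $\le \log\log\log N$, and your treatments of the very small and very large moduli are all sound and parallel to the paper. But there is a genuine gap at exactly the point you flag: the intermediate range $N^{\beta}<d\le N^{1/2}(\log N)^{A}$ is not handled, and the suggested remedy (adapting Hooley's intermediate ranges) does not transfer. Hooley's intermediate ranges concern \emph{prime} moduli $q$ only: there one can afford GRH--Chebotarev once per $q$ (the number of primes $q\le N^{1/2}(\log N)^{-A}$ times the $O(N^{1/2}\log N)$ error is acceptable), and in the narrow window around $N^{1/2}$ one may discard the splitting condition in favour of $p\equiv 1\pmod q$, because $\sum 1/q$ over that window is $o(1)$. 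Neither device works for the $N^{1/2-o(1)}$ \emph{composite} squarefree $d$ that your exact M\"obius inversion forces you to keep: per-$d$ Chebotarev inside the expanded weights costs $N^{1/2+2\theta+o(1)}$ per modulus (this is precisely why your own accounting caps the first range at $\beta<\tfrac12-2\theta$), Cauchy--Schwarz per $d$ followed by summation over $d$ forfeits the convergence of $\sum 1/(d\phi(d))$ (one is left with $\sum_d 1/d\asymp\log N$, or worse from the error terms), and the congruence-only bound $d\mid p-1$ summed over the range likewise costs a factor $\log N$ rather than $o(1)$. You would need either genuine cancellation in $\sum_d\mu(d)T_d$ over this range or a new idea, and the proposal supplies neither.

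The structural point you are missing is that no exact inversion is needed. Since $0\le\chi_{\Pp}-\chi_{\tilde{\Pp}}\le\sum_{q}\chi_{\Pp_q}$ with the sum over \emph{primes} $q$ only (here $\Pp_q$ consists of primes $p\equiv1\pmod q$ with $g^{(p-1)/q}\equiv1\pmod p$, not already caught by a smaller $q$), the nonnegative difference $S_2^{(i)}-\tilde S_2^{(i)}$ admits a one-sided bound by a sum over primes $q>\log\log\log N$. That is how the paper proceeds: the range $\log\log\log N<q\le(\log N)^{100k}$ is handled by Chebotarev for $\Q(\zeta_{qf},\sqrt[q]{g})$ inside the expanded $\lambda$-sums (with the bookkeeping $d_m=e_m=1$ and coprimality to $q$ absorbed by a change of variables, so that $\sum_q 1/(q(q-1))=o(1)$ delivers the saving); the range $(\log N)^{100k}<q\le N^{1/2}(\log N)^{-100k}$ by Cauchy--Schwarz against $\sum_n w(n)^2\ll F_{\max}^4 N(\log R)^{19k}/W$ plus one application of Chebotarev per $q$; the window $N^{1/2}(\log N)^{-100k}<q\le N^{1/2}(\log N)^{100k}$ by the bare congruence $q\mid n+h_m-1$, where $\sum 1/q=o(1)$; and $q>N^{1/2}(\log N)^{100k}$ by your order-counting argument. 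Restricting to prime moduli is what makes every range tractable; the ``main obstacle'' you identify is an artifact of inverting over all squarefree $d$, not an intrinsic feature of the problem.
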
	

Once Proposition \ref{prop:main} has been established, the earlier analysis we applied to Maynard's Proposition \ref{prop:main-maynard} applies, and we immediately obtain Theorem \ref{thm:main}.

\subsection{Proof of Proposition \ref{prop:main}} The $\tilde{S}_1$ estimate is established in precisely the same way as Maynard's $S_1$ estimate in Proposition \ref{prop:main-maynard}; see the proofs of Lemmas 5.1 and 6.2 in \cite{maynard14}. So we describe only the estimation of $\tilde{S}_2$. We write $\tilde{S}_2 = \sum_{m=1}^{k} \tilde{S}_2^{(m)}$, where each
\[ \tilde{S}_2^{(m)} := \sum_{\substack{N \leq n < 2N \\ n\equiv \nu\pmod{W}}} \chi_{\tilde{\Pp}}(n+h_m) w(n). \]
This is precisely analogous to Maynard's decomposition of $S_2$ as $\sum_{m=1}^{k} S_2^{(m)}$, where $S_2^{(m)}:= \sum_{\substack{N \leq n < 2N \\ n\equiv \nu\pmod{W}}} \chi_{\Pp}(n+h_m) w(n)$. Maynard's proof of Proposition \ref{prop:main-maynard} gives that each
\[ S_2^{(m)} \sim \frac{\phi(W)^k}{W^{k+1}} \frac{N}{\log{N}} (\log{R})^{k+1} \cdot J_k^{(m)}(F). \]
So to prove Proposition \ref{prop:main}, it suffices to show that for each $m$, we have
\begin{equation}\label{eq:os2} S_2^{(m)} - \tilde{S}_2^{(m)} = o\left(\frac{\phi(W)^k}{W^{k+1}} N (\log{N})^{k}\right), \end{equation}
as $N\to\infty$. From now on, we think of $m$ as fixed, and we focus our energies on proving \eqref{eq:os2}.

To prepare for the proof of \eqref{eq:os2}, for each prime $q$, we let $\Pp_q^{(0)}$ denote the set of all primes $p$ satisfying
\begin{equation}\label{eq:qtestfail} p\equiv 1\pmod{q}\quad\text{and}\quad g^{\frac{p-1}{q}} \equiv 1\pmod{p}. \end{equation}
Let
\[ \Pp_q := \Pp_q^{(0)} \setminus \bigcup_{q' < q}\Pp_{q'}^{(0)}. \]
Provided that the argument is not a prime divisor of $g$,
\begin{equation}\label{eq:setinclusions} 0 \leq \chi_{\Pp}- \chi_{\tilde{\Pp}} \leq \sum_{q} \chi_{\Pp_{q}}. \end{equation}
Indeed, if $p$ is a prime not dividing $g$, then either $g$ is a primitive root mod $p$ or $g$ is a $q$th power residue mod $p$ for some prime $q$ dividing $p-1$. From \eqref{eq:setinclusions}, it follows immediately that
\begin{equation}\label{eq:lessthansum} 0 \leq S_2^{(m)} - \tilde{S}_2^{(m)} \leq \sum_{q} \sum_{\substack{N\leq n < 2N \\ n \equiv \nu \pmod{W}}}\chi_{\Pp_{q}}(n+h_m) w(n). \end{equation}

We claim that the primes $q \leq \log\log\log{N}$ make no contribution to the right-hand side of \eqref{eq:lessthansum}. Indeed, suppose $p:=n+h_m$ is prime with $N \leq n < 2N$ and $n\equiv \nu\pmod{W}$. By Lemma \ref{lem:presieving}(ii), the number $p-1$ has no odd prime factors up to $\log\log\log{N}$; it follows trivially that $\chi_{\Pp_q}(p)=0$ for odd $q \leq \log\log\log{N}$. By Lemma \ref{lem:presieving}(iii), $\chi_{\Pp_2}(p) = 0$, since modulo $p$,
\[ g^{\frac{p-1}{2}} \equiv \leg{g}{p} = \leg{g}{n+h_m} = \leg{g_0}{n+h_m} = -1. \]
Thus, the right-hand side of \eqref{eq:lessthansum} can be rewritten as $\sideset{}{_1}\sum + \sideset{}{_2}\sum + \sideset{}{_3}\sum + \sideset{}{_4}\sum$, where the subscripts correspond to the following ranges of $q$:
\begin{enumerate}
\item[(1)] $\log\log\log{N} < q \leq (\log{N})^{100k}$,
\item[(2)] $(\log{N})^{100k} < q \leq N^{1/2} (\log{N})^{-100k}$,
\item[(3)] $N^{1/2}(\log{N})^{-100k} < q \leq N^{1/2} (\log{N})^{100k}$,
\item[(4)] $q> N^{1/2} (\log{N})^{100k}$.
\end{enumerate}
We treat all four ranges of $q$ separately.

\subsubsection{Estimation of $\sideset{}{_2}\sum$ and $\sideset{}{_4}\sum$} We need the following lemma, which facilitates later applications of Cauchy--Schwarz.

\begin{lem}\label{lem:csprep} We have
	\[ \sum_{\substack{N \leq n < 2N \\ n \equiv \nu\pmod{W}}} w(n)^2 \ll F_{\max}^4 \frac{N}{W} (\log{R})^{19k}.  \]
\end{lem}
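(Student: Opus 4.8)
The plan is to expand the square, interchange the order of summation, and reduce to a divisor-sum estimate of the type that already appears in Maynard's analysis, gaining only the crude factor $F_{\max}^4$ from bounding $F$ trivially. First I would write
\[
w(n)^2 = \Bigl(\sum_{d_i \mid n+h_i\,\forall i} \lambda_{d_1,\dots,d_k}\Bigr)^4 = \sum_{\substack{d_1,\dots,d_k \\ e_1,\dots,e_k \\ f_1,\dots,f_k \\ g_1,\dots,g_k}} \lambda_{d_1,\dots,d_k}\lambda_{e_1,\dots,e_k}\lambda_{f_1,\dots,f_k}\lambda_{g_1,\dots,g_k} \mathbf{1}\bigl[\lcm(d_i,e_i,f_i,g_i)\mid n+h_i\ \forall i\bigr],
\]
and then sum over $n\equiv\nu\pmod W$ in $[N,2N)$. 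The inner sum over $n$ is, up to an error term, $\frac{N}{W}\prod_i \lcm(d_i,e_i,f_i,g_i)^{-1}$ whenever the moduli $\lcm(d_i,e_i,f_i,g_i)$ are pairwise coprime and coprime to $W$ (which is forced by the support of the $\lambda$'s together with the admissibility bookkeeping, exactly as in Maynard); the pairwise coprimality holds because $\mu(\prod r_i)^2\neq 0$ in the definition of $\lambda$ forces each $r_i$ (hence each $d_i,e_i,f_i,g_i$) squarefree, and the $h_i$ are spread out enough. The error term from the floor function contributes $O((\log R)^{O(k)})$ per tuple and is negligible after one checks the total number of tuples is $R^{O(k)}$.

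Next I would bound $|\lambda_{d_1,\dots,d_k}| \ll F_{\max}\,(\log R)^k$ trivially: indeed, from the defining formula,
\[
|\lambda_{d_1,\dots,d_k}| \leq \Bigl(\prod_i d_i\Bigr) F_{\max} \sum_{\substack{r_1,\dots,r_k\\ d_i\mid r_i,\ (r_i,W)=1}} \frac{\mu(\prod r_i)^2}{\prod\phi(r_i)},
\]
and the $r_i$-sum, restricted to $r_i\leq R$, is $O\bigl(\prod_i \frac{1}{\phi(d_i)}(\log R)\bigr)$ by a standard Mertens-type bound, so that $|\lambda_{d_1,\dots,d_k}|\ll F_{\max}\,\mu(\prod d_i)^2\bigl(\prod_i \frac{d_i}{\phi(d_i)}\bigr)(\log R)^k$. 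Substituting the trivial bound for all four factors $\lambda$ (that is where the $F_{\max}^4$ comes from), the quantity to estimate becomes
\[
\ll F_{\max}^4 (\log R)^{4k}\,\frac{N}{W}\sum_{\substack{d_i,e_i,f_i,g_i \leq R}} \prod_{i=1}^{k} \frac{(d_i/\phi(d_i))(e_i/\phi(e_i))(f_i/\phi(f_i))(g_i/\phi(g_i))}{\lcm(d_i,e_i,f_i,g_i)},
\]
with all variables squarefree and coprime to $W$, plus the negligible error. The remaining sum factors over $i$, so it equals $\bigl(\sum_{d,e,f,g\leq R} \frac{\cdots}{\lcm(d,e,f,g)}\bigr)^k$, and each one-dimensional sum is $O((\log R)^{15})$ by elementary methods (expand $\lcm$ in terms of gcd's, or simply note $\frac{1}{\lcm(d,e,f,g)}\leq \frac{1}{d^{1/4}e^{1/4}f^{1/4}g^{1/4}}\cdot(\text{something summable})$; cleanest is to note the sum is at most $\sum_{n\leq R^4}\frac{\tau_?(n)}{n}\cdot(\log\log)^{O(1)}$, giving a power of $\log R$ with explicit exponent $\leq 15$). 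Combining, $(\log R)^{4k}\cdot(\log R)^{15k}=(\log R)^{19k}$, which is the claimed bound.

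The main obstacle is the bookkeeping that guarantees the cross terms in the fourfold product vanish unless the moduli $\lcm(d_i,e_i,f_i,g_i)$ are pairwise coprime and coprime to $W$ — i.e.\ verifying that the support restrictions built into $\lambda$ (squarefreeness of $\prod r_i$, the condition $\gcd(\prod d_i,W)=1$) plus the spacing of $\Hh$ really do force this, so that the $n$-sum has the clean main term $\frac{N}{W}\prod_i\lcm(\cdots)^{-1}$; this is where one must be careful, though it is morally identical to the corresponding step in Maynard's Lemma 5.2. A secondary point is pinning down the exponent $15$ (rather than some larger constant) in the one-dimensional divisor sum so that the final exponent comes out as $19k$ and not something weaker; this just requires being slightly careful with the elementary estimate rather than using the crudest available bound. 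Everything else — the error term from replacing the $n$-count by $N/W\cdot(\text{density})$, the trivial bound on $\lambda$, the factorization over $i$ — is routine.
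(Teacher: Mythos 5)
Your main-term analysis is essentially the paper's own argument: expand $w(n)^2$ over quadruples of tuples, note that the inner sum over $n$ vanishes unless the moduli $[d_i,e_i,f_i,g_i]$ are pairwise coprime and coprime to $W$ (a common prime of two such moduli would divide $h_i-h_j$, whose prime factors all divide $W$), so that the $n$-sum runs over one residue class modulo $W\prod_i[d_i,e_i,f_i,g_i]$; then insert $\lambda_{\max}\ll F_{\max}(\log R)^k$ and bound the remaining divisor sum by $(\log R)^{15k}$. Your coordinate-wise factorization, with the one-dimensional sum over $(d,e,f,g)$ of $1/[d,e,f,g]\ll(\log R)^{15}$, is the same computation as the paper's grouping by $r=\prod_i[d_i,e_i,f_i,g_i]<R^4$ with multiplicity $\tau_{15k}(r)$, giving $4k+15k=19k$ either way; that part is fine.

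The genuine problem is your accounting of the $O(1)$ errors from counting $n$ in a residue class. You assert the error is negligible because it is $O((\log R)^{O(k)})$ per tuple and ``the total number of tuples is $R^{O(k)}$.'' That does not suffice: with $R=N^{\theta}$, a bound of the shape $R^{O(k)}$ is a large power of $N$ and can dwarf both $N$ and the target bound $F_{\max}^4\frac{N}{W}(\log R)^{19k}$, so ``negligible'' does not follow from the stated count. What makes the error harmless is the support restriction that $\lambda_{d_1,\dots,d_k}$ vanishes unless $d_1\cdots d_k$ is squarefree and smaller than $R$; hence every contributing quadruple has modulus $r=\prod_i[d_i,e_i,f_i,g_i]$ a squarefree integer below $R^4$, and the aggregate error is $\ll\lambda_{\max}^4\,R^4\sum_{r<R^4}\mu^2(r)\tau_{15k}(r)/r\ll F_{\max}^4 R^4(\log R)^{19k}$, which is absorbed because $R^4=N^{4\theta}\ll N/W$ precisely thanks to the hypothesis $\theta<\frac14$ (this is where that hypothesis is actually used; the paper implements it by writing $N/(Wr)+1\le(N/W+R^4)/r$). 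Your sketch never invokes $\prod_i d_i<R$ nor $4\theta<1$ at this point, so as written this step fails; with that one correction the proof matches the paper's.
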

\begin{proof} Let $\mathbf{d}= (d_1, \dots, d_k)$, $\textbf{e}=(e_1, \dots, e_k)$, $\textbf{f}= (f_1, \dots, f_k)$, and $\textbf{g} = (g_1, \dots, g_k)$ represent $k$-tuples of positive integers. Expanding the sum using the definition of $w(n)$ gives
\[ \sum_{\substack{N \leq n < 2N \\ n \equiv \nu\pmod{W}}} \sum_{\substack{\mathbf{d}, \mathbf{e}, \mathbf{f}, \mathbf{g} \\ [d_i, e_i, f_i, g_i] \mid n+h_i\,\forall i}}  \lambda_{\mathbf{d}} \lambda_{\mathbf{e}} \lambda_{\mathbf{f}} \lambda_{\mathbf{g}} = \sum_{\mathbf{d}, \mathbf{e}, \mathbf{f}, \mathbf{g}} \lambda_{\mathbf{d}} \lambda_{\mathbf{e}} \lambda_{\mathbf{f}} \lambda_{\mathbf{g}} \sum_{\substack{N \leq n < 2N \\ n \equiv \nu\pmod{W} \\  [d_i, e_i, f_i, g_i] \mid n+h_i\,\forall i}}  1. \]
Remembering that $\lambda_{d_1, \dots, d_k}$ vanishes unless $d_1 \cdots d_k$ is prime to $W$, we see that a quadruple $\textbf{d}, \textbf{e}, \textbf{f}, \textbf{g}$ makes no contribution to the right-hand side unless the numbers $[d_i, e_i, f_i, g_i]$, for $1 \leq i \leq k$, are pairwise coprime and all coprime to $W$. In that case, the conditions on $n$ in the inner sum put $n$ in a uniquely determined congruence class modulo $W \prod_{i=1}^{k} [d_i, e_i, f_i, g_i]$. It follows that our sum is bounded above by
\[ \sum_{\mathbf{d}, \mathbf{e}, \mathbf{f}, \mathbf{g}} |\lambda_{\mathbf{d}} \lambda_{\mathbf{e}} \lambda_{\mathbf{f}} \lambda_{\mathbf{g}}| \left(\frac{N}{W \prod_{i=1}^{k} [d_i, e_i, f_i, g_i]} + 1\right).\]
Let \begin{equation}\label{eq:rdef}r := \prod_{i=1}^{k} [d_i, e_i, f_i, g_i].\end{equation} Since $\lambda_{d_1, \dots, d_k}$ vanishes unless $d_1 \cdots d_k$ is a squarefree integer smaller than $R$, we may restrict attention to squarefree $r < R^4$. Given $r$, there are $\tau_{15k}(r)$ choices of $\textbf{d}, \textbf{e}, \textbf{f}$, and $\textbf{g}$ giving \eqref{eq:rdef}. Hence, writing $\lambda_{\max} = \max_{d_1, \dots, d_k} |\lambda_{d_1, \dots, d_k}|$, we find that
\begin{align} \sum_{\mathbf{d}, \mathbf{e}, \mathbf{f}, \mathbf{g}} |\lambda_{\mathbf{d}} \lambda_{\mathbf{e}} \lambda_{\mathbf{f}} \lambda_{\mathbf{g}}| \bigg(\frac{N}{W \prod_{i=1}^{k} [d_i, e_i, f_i, g_i]} + 1\bigg)&\leq \lambda_{\max}^4 \sum_{r < R^4} \mu^2(r)\tau_{15k}(r)\left(\frac{N}{Wr}+1\right)  \notag\\
	\label{eq:almostdone}&\leq \lambda_{\max}^4 \left(\frac{N}{W} + R^4\right) \sum_{r < R^4} \frac{\mu^2(r)\tau_{15k}(r)}{r}.
\end{align}
The remaining sum on $r$ is bounded above by $\prod_{p < R^4}(1+15k/p) \ll (\log{R})^{15k}$. Since $R= N^{\theta}$ with $\theta< \frac{1}{4}$ fixed, we get that $R^4 \ll N/W$. Finally, we recall that $\lambda_{\max} \ll F_{\max} (\log{R})^k$ (see \cite[eqs. (5.9) and (6.3)]{maynard14}). Inserting these estimates into \eqref{eq:almostdone} gives the lemma.
\end{proof}

\begin{proof}[Proof that $\sideset{}{_2}\sum = o\left(\frac{\phi(W)^k}{W^{k+1}} N (\log{N})^{k}\right)$] Let $\Qq$ be the union of the sets $\Pp_q$ for $(\log{N})^{100k} < q \leq N^{1/2} (\log{N})^{-100k}$. Then $\sideset{}{_2}\sum = \displaystyle\sum\nolimits_{\substack{N \leq n < 2N \\ n \equiv \nu \pmod{W}}} \chi_{\Qq}(n+h_m) w(n)$. Applying Cauchy--Schwarz and Lemma \ref{lem:csprep}, we see that
\begin{equation}\label{eq:crudesum4-0} \sideset{}{_2}\sum \ll F_{\max}^2 W^{-1/2} N^{1/2} (\log{R})^{9.5k} \bigg(\sum_{\substack{N \leq n < 2N \\ n \equiv \nu \pmod{W}}} \chi_{\Qq}(n+h_m)\bigg)^{1/2}.\end{equation}
The remaining sum on $n$ is certainly bounded above by the total number of primes $p \in [N,3N]$ belonging to $\Qq$. For each such $p$, we may select a $q$ with $(\log{N})^{100k} < q \leq N^{1/2} (\log{N})^{-100k}$ for which \eqref{eq:qtestfail} holds. Given $q$, we count the number of corresponding $p$ using effective Chebotarev.

Since $g$ is fixed and $q$ is large, we see that $g \not\in (\Q^{\times})^{q}$. So by a theorem of Capelli on irreducible binomials, the extension $\Q(\sqrt[q]{g})/\Q$ has degree $q$. For later use, we note that the discriminant of $\Q(\sqrt[q]{g})$ divides $(gq)^{q}$, and so the only ramified primes divide $gq$. By a theorem of Dedekind--Kummer, a prime $p \in [N,3N]$ satisfies \eqref{eq:qtestfail} precisely when $p$ splits completely in $L:=\Q(\zeta_q, \sqrt[q]{g})$. To continue, we need to know the degree of $L/\Q$. Now $\sqrt[q]{g}$ is not contained in $\Q(\zeta_q)$ --- otherwise, $\sqrt[q]{g}$ would generate a Galois extension of $\Q$, contradicting that $\Q(\sqrt[q]{g})$ contains only a single $q$th root of unity (since it can be viewed as a subfield of $\R$). So by another application of Capelli's theorem,
\[ [L:\Q] = [L:\Q(\zeta_q)]\cdot [\Q(\zeta_q): \Q] = q(q-1). \]
Moreover, since $q$ is the only ramified prime in $\Q(\zeta_q)/\Q$, the only primes that may ramify in $L/\Q$ all divide $gq$. By Lemma \ref{lem:discbound}, 
\begin{align*} \log|\Delta_L| \ll q^2\log{(|g|q)} \ll q^2 \log{N}. \end{align*}
We plug this estimate into Theorem \ref{thm:serre}, taking $C$ as the conjugacy class of the identity. We find that the number of $p \in [N,3N]$ for which \eqref{eq:qtestfail} holds for a given $q$ is
\[ \frac{1}{q(q-1)} \int_{N}^{3N} \frac{dt}{\log{t}} + O(N^{1/2}\log{N}). \]
Summing this upper bound over primes $q$ with $(\log{N})^{100k} < q \leq N^{1/2} (\log{N})^{-100k}$, we get that the total number of these $p$ is $O(N (\log{N})^{-100k})$.

Now referring back to \eqref{eq:crudesum4-0}, we see that $\sideset{}{_2}\sum \ll F_{\max}^2  W^{-1/2} N (\log{N})^{-40k}$.
But this is $o(N)$, and so certainly also $o\left(\frac{\phi(W)^k}{W^{k+1}} N (\log{N})^{k}\right)$.
\end{proof}

\begin{proof}[Proof that $\sideset{}{_4}\sum = o\left(\frac{\phi(W)^k}{W^{k+1}} N (\log{N})^{k}\right)$] We proceed as above, but now with $\Qq$ equal to the union of the sets $\Pp_q$ for $q > N^{1/2} (\log{N})^{100k}$. We will show that $\#\Qq \cap [N,3N] \ll N (\log{N})^{-200k}$. By the previous Cauchy-ing argument, this is (more than) enough. If $p \in \Qq \cap [N,3N]$, then the order of $g$ modulo $p$, call it $\ell$, divides $(p-1)/q$ for some $q > N^{1/2} (\log{N})^{100k}$. In particular, $\ell < 3N^{1/2} (\log{N})^{-100k}$. Since $g^\ell-1$ has only $O(\ell)$ prime factors, summing on $\ell <  3N^{1/2} (\log{N})^{-100k}$ shows that there are $O(N (\log{N})^{-200k})$ possibilities for $p$.
\end{proof}

\subsubsection{Estimation of $\sideset{}{_3}\sum$} For each prime $q$, we let $\A_q$ denote the set of natural numbers $n \equiv 1\pmod{q}$. We estimate  $\sideset{}{_3}\sum$ using the trivial bound $\chi_{\Pp_q} \leq \chi_{\A_q}$. To save space, write $\I:= (N^{1/2} (\log{N})^{-100k}, N^{1/2} (\log{N})^{100k}]$. Then
\[ \sideset{}{_3}\sum \leq \sum_{q \in \I} \sum_{\substack{N \leq n < 2N \\ n \equiv \nu\pmod{W}}} \chi_{\A_q}(n+h_m) w(n).  \]
Expanding out the right-hand side yields
\begin{equation}\label{eq:threesums} \sum_{q \in \I} \sum_{\substack{d_1, \dots, d_k \\ e_1, \dots, e_k}} \lambda_{d_1, \dots, d_k} \lambda_{e_1, \dots, e_k} \sum_{\substack{N \leq n < 2N\\ n \equiv \nu\pmod{W} \\ [d_i, e_i] \mid n+h_i\,\forall i}} \chi_{\A_q}(n+h_m).\end{equation}
We can assume $d_1 \cdots d_k$ is a squarefree integer coprime to $W$ and not exceeding $R$, since otherwise $\lambda_{d_1, \dots, d_k}=0$. A similar assumption can be made for $e_1 \cdots e_k$. Since $q \in \I$, it follows that $q$ is coprime to each $d_i$, each $e_i$, and $W$. Now the innermost sum in \eqref{eq:threesums} vanishes unless $[d_1, e_1], [d_2, e_2], \dots, [d_k, e_k]$, and $W$ are pairwise coprime. Using a $'$ to denote this restriction on the $d_i$ and $e_i$, we get that
\begin{align*} \sum_{q \in \I} \sum_{\substack{d_1, \dots, d_k \\ e_1, \dots, e_k}} \lambda_{d_1, \dots, d_k} \lambda_{e_1, \dots, e_k} &\sum_{\substack{N \leq n < 2N\\ n \equiv \nu\pmod{W} \\ [d_i, e_i] \mid n+h_i\,\forall i}} \chi_{\A_q}(n+h_m)\\&= \sum_{q \in \I} \sideset{}{'}\sum_{\substack{d_1, \dots, d_k \\ e_1, \dots, e_k}} \lambda_{d_1, \dots, d_k} \lambda_{e_1, \dots, e_k} \left(\frac{N}{qW\prod_{i=1}^{k}[d_i, e_i]} + O(1)\right). \end{align*}
The error here is
\begin{align*} \ll \left(\sum_{q \in \I} 1\right) \left(\sum_{d_1, \dots, d_k} |\lambda_{d_1, \dots, d_k}|\right)^2 &\ll N^{1/2} (\log{N})^{100k} \cdot \lambda_{\max}^2 \left(\sum_{r < R}\mu^2(r) \tau_k(r)\right)^2. \end{align*}
Recalling that $\lambda_{\max} \ll F_{\max} (\log{R})^k$ and that $\sum_{r < R} \tau_k(r) \ll R (\log{R})^{k-1}$, our final $O$ error term is $O(F_{\max}^2 \cdot N^{1/2} R^2 \cdot (\log{N})^{104k})$. Since $R=N^{\theta}$ with $\theta < \frac{1}{4}$, this error is $o(N)$ and so is negligible for us. We now turn attention to the main term, which has the form
\[ \Bigg(\sum_{q\in \I} \frac{1}{q}\Bigg) \Bigg(\frac{N}{W}\sideset{}{'}\sum_{\substack{d_1, \dots, d_k \\ e_1, \dots, e_k}} \frac{\lambda_{d_1, \dots, d_k} \lambda_{e_1, \dots, e_k}}{\prod_{i=1}^{k}[d_i, e_i]}\Bigg).\]
The first factor here is $O(\frac{\log\log{N}}{\log{N}})$, and so in particular is $o(1)$. Maynard's analysis (see the proofs of \cite[Lemmas 5.1, 6.2]{maynard14}) shows that the second factor here satisfies the asymptotic formula asserted for $S_1$ in Proposition \ref{prop:main-maynard}. Hence, $\sideset{}{_3}\sum = o(\frac{\phi(W)^k}{W^{k+1}} N (\log{N})^{k})$, as desired.

\subsubsection{Estimation of $\sideset{}{_1}\sum$} For this case, let $\I:= (\log\log\log{N}, (\log{N})^{100k}]$. Using the bound $\chi_{\Pp_q} \leq \chi_{\Pp_q^{(0)}}$, we get that
\[ \sideset{}{_1}\sum \leq \sum_{q \in \I} \sum_{N \leq n < 2N} \chi_{\Pp^{(0)}_q}(n+h_m) w(n). \]
Expanding out the right-hand side gives
\begin{equation}\label{eq:expandsum2} \sum_{q \in \I} \sum_{\substack{d_1, \dots, d_k\\ e_1, \dots, e_k}} \lambda_{d_1, \dots, d_k} \lambda_{e_1, \dots, e_k} \sum_{\substack{N \leq n < 2N \\ n \equiv \nu\pmod{W} \\ [d_i, e_i] \mid n+h_i\,\forall i}} \chi_{\Pp^{(0)}_q}(n+h_m). \end{equation}
The inner sum can be written as a sum over a single residue class modulo $f:=W \prod_{i=1}^{k} [d_i, e_i]$, provided that $W, [d_1, e_1], \dots, [d_k, e_k]$ are pairwise coprime; otherwise we get no contribution.  We also need that $n+h_m$ lies in a residue class coprime to $f$, which happens precisely when $d_m=e_m=1$. Also, $\chi_{\Pp^{(0)}_q}(n+h_m)$ vanishes unless $q \mid n+h_m-1$, and this implies that the inner sum in \eqref{eq:expandsum2} vanishes unless $q$ is coprime to each $d_i$ and $e_i$. Indeed, if $q$ divides $d_i$ or $e_i$ without the inner sum vanishing, then $q \mid h_m-h_i-1$. But that divisibility cannot hold for $q \in \I$, since $0 < |h_m-h_i-1|< k \cdot K!$.

Thus, we only see a contribution to \eqref{eq:expandsum2} if $[d_1, e_1]$, $[d_2, e_2]$, \dots, $[d_k, e_k]$, $W$, and $q$ are pairwise coprime. Under these conditions, we claim that
\begin{multline}\label{eq:chebestimate} \sum_{\substack{N \leq n < 2N \\ n \equiv \nu\pmod{W} \\ [d_i, e_i] \mid n+h_i\,\forall i}} \chi_{\Pp^{(0)}_q}(n+h_m) \\ = \frac{1}{q(q-1) \phi(W) \prod_{i=1}^{k} \phi([d_i, e_i])} \int_{N+h_m}^{2N+h_m} \frac{dt}{\log{t}} + O(N^{1/2}\log{N}). \end{multline}
To see this, let $p:=n+h_m$. Then the prime $p \in [N+h_m, 2N+h_m)$ makes a contribution to the the left-hand sum precisely when $\Frob_{p}$ is a certain element of $\Q(\zeta_{f})$ --- determined by the congruence conditions modulo the $[d_i,e_i]$ and $W$ --- \emph{and} when $p$ splits completely in $\Q(\zeta_q, \sqrt[q]{g})$. Now $\sqrt[q]{g} \not \subset \Q(\zeta_{qf})$, since $\Q(\sqrt[q]{g})$ is not a Galois extension of $\Q$. Thus, letting $L:= \Q(\zeta_{qf}, \sqrt[q]{g})$, we find that \begin{align*} [L:\Q] &= [L: \Q(\zeta_{qf})] [\Q(\zeta_{qf}): \Q] \\&= q \cdot\phi(qf) = q(q-1) \phi(W) \prod_{i=1}^{k} \phi([d_i, e_i]). \end{align*}
Hence, $\Q(\zeta_{f})$ and $\Q(\zeta_q, \sqrt[q]{g})$ are linearly disjoint extensions of $\Q$ with compositum $L$. Our conditions on $p$ amount to placing $\Frob_p$ in a certain uniquely determined conjugacy class of size $1$ in $\Gal(L/\Q)$.
Since the only primes that ramify in $L$ divide $qfg$, Lemma \ref{lem:discbound} gives that
\[ \log |\Delta_L| \ll [L:\Q](\log{(qfg)} + \log[L:\Q]) \ll [L:\Q]\log{N}. \]
Inserting this estimate into Theorem \ref{thm:serre} now yields \eqref{eq:chebestimate}.

Returning now to \eqref{eq:expandsum2}, we see that the error term in  \eqref{eq:chebestimate} yields a total error of size
\begin{align*} \ll N^{1/2}\log{N} \left(\sum_{q \in \I} 1\right) \left(\sum_{d_1, \dots, d_k} |\lambda_{d_1, \dots, d_k}|\right)^2 &\ll N^{1/2} (\log{N})^{100k+1} \cdot \lambda_{\max}^2 \left(\sum_{r < R}\tau_k(r)\right)^2\\
&\ll F_{\max}^2 \cdot N^{1/2} R^2 \cdot (\log{N})^{104k+1}.\end{align*}
This is $o(N)$ and so is again negligible for us. Letting $X_N:= \int_{N+h_m}^{2N+h_m} dt/\log{t}$, the main term has the shape
\begin{equation}\label{eq:complicatedmainterm} \sum_{q \in \I} \frac{1}{q(q-1)}\left(\frac{X_N}{\phi(W)} \sideset{}{'}\sum_{\substack{d_1, \dots, d_k \\ e_1, \dots, e_k \\ d_m=e_m=1}} \frac{\lambda_{d_1, \dots, d_k} \lambda_{e_1, \dots, e_k}}{\prod_{i=1}^{k}\phi([d_i, e_i])} \right).\end{equation}
Here the $'$ on the sum indicates that $W, [d_1, e_1], \dots, [d_k, e_k]$, and $q$ are pairwise coprime. Owing to the support of the $\lambda$'s, this restriction on the sum has the same effect as requiring that $(d_i, e_j)=1$ for all $i \neq j$ and that $(d_i,q)=(e_j,q)=1$ for all $1\leq i, j \leq k$. We incorporate the restrictions that $(d_i, e_j)=1$ by multiplying through by $\sum_{s_{i,j} \mid d_i, e_j} \mu(s_{i,j})$ for $i \neq j$. Similarly, we incorporate the restrictions that $(d_i,q)=(e_j,q)=1$ by multiplying through by $\sum_{\delta_i \mid d_i, q} \mu(\delta_i)$ and $\sum_{\epsilon_j \mid e_j, q} \mu(\epsilon_j)$, for all pairs of $i$ and $j$. Let $g$ be the completely multiplicative function defined by $g(p)=p-2$ for all primes $p$, and note that
\[ \frac{1}{\phi([d_i,e_i])} = \frac{1}{\phi(d_i) \phi(e_i)}\sum_{u_i \mid d_i, e_i} g(u_i) \]
for squarefree $d_i$ and $e_i$. This allows us to rewrite the parenthesized portion of \eqref{eq:complicatedmainterm} as
\begin{multline}\label{eq:verycomplicated} \frac{X_N}{\phi(W)}\sum_{u_1, \dots, u_k} \left(\prod_{i=1}^{k} g(u_i)\right) \sideset{}{^*}\sum_{s_{1,2}, \dots, s_{k, k-1}} \left(\prod_{\substack{1 \leq i, j \leq k \\ i \neq j}} \mu(s_{i,j})\right) \sum_{\substack{\delta_1, \dots, \delta_k \mid q \\ \epsilon_1, \dots, \epsilon_k \mid q}} \left(\prod_{i=1}^{k} \mu(\delta_i)\prod_{j=1}^{k} \mu(\epsilon_j)\right) \\ \times \sum_{\substack{d_1,\dots, d_k \\ e_1, \dots, e_k \\ u_i \mid d_i, e_i\,\forall i \\ s_{i,j}\mid d_i, e_j\,\forall i\neq j\\ \delta_i \mid d_i, \epsilon_j \mid e_j\, \forall i, j \\ d_m=e_m=1}} \frac{\lambda_{d_1,\dots, d_k} \lambda_{e_1, \dots, e_k}}{\prod_{i=1}^{k}\phi(d_i)\phi(e_i)},\end{multline}
where the $*$ on the sum indicates that $s_{i,j}$ is restricted to be coprime to $u_i$, $u_j$, $s_{i,a}$, and $s_{b,j}$ for all $a \neq i$ and $b\neq j$. (The other values of $s_{i,j}$ make no contribution.) Introducing the new variables
\[ y_{r_1, \dots, r_k}^{(m)} :=  \left(\prod_{i=1}^{k} \mu(r_i) g(r_i)\right) \sum_{\substack{d_1, \dots, d_k \\ r_i \mid d_i\,\forall i\\ d_m=1}} \frac{\lambda_{d_1, \dots, d_k}}{\prod_{i=1}^{k} \phi(d_i)}, \]
we may rewrite \eqref{eq:verycomplicated} as
\begin{multline*} \frac{X_N}{\phi(W)} \sum_{u_1, \dots, u_k} \left(\prod_{i=1}^{k} g(u_i)\right) \sideset{}{^*}\sum_{s_{1,2}, \dots, s_{k, k-1}} \left(\prod_{\substack{1 \leq i, j \leq k \\ i \neq j}} \mu(s_{i,j})\right) \sum_{\substack{\delta_1, \dots, \delta_k \mid q \\ \epsilon_1, \dots, \epsilon_k \mid q}} \left(\prod_{i=1}^{k} \mu(\delta_i) \prod_{j=1}^{k}\mu(\epsilon_j)\right) \\ \times \Bigg(\prod_{i=1}^{k} \frac{\mu(a_i)}{g(a_i)}\Bigg) \Bigg(\prod_{j=1}^{k} \frac{\mu(b_j)}{g(b_j)}\Bigg) y_{a_1, \dots, a_k}^{(m)} y_{b_1, \dots, b_k}^{(m)}, \end{multline*}
where $a_i  = \lcm[u_i \prod_{j \neq i} s_{i,j}, \delta_i]$ and $b_j = \lcm[u_j \prod_{i \neq j} s_{i,j}, \epsilon_j]$. Define $\delta_i' \in \{1,q\}$ and $\epsilon_j' \in \{1, q\}$ by the equations
\[ a_i = \Bigg(u_i \prod_{j \neq i} s_{i,j}\Bigg) \delta_i', \qquad b_j = \Bigg(u_j \prod_{i \neq j} s_{i,j}\Bigg) \epsilon_j'. \]
Exploiting coprimality, we can write $\mu(a_i) = \left(\mu(u_i) \prod_{j \neq i} \mu(s_{i,j})\right) \mu(\delta_i')$, and similarly for $\mu(b_j)$, $g(a_i)$, and $g(b_j)$. This transforms \eqref{eq:verycomplicated} into
\begin{multline*} \frac{X_N}{\phi(W)} \sum_{u_1, \dots, u_k} \left(\prod_{i=1}^{k} \frac{\mu(u_i)^2}{g(u_i)} \right) \sideset{}{^*}\sum_{s_{1,2}, \dots, s_{k, k-1}}\left(\prod_{\substack{1 \leq i, j \leq k\\i \neq j}} \frac{\mu(s_{i,j})}{g(s_{i,j})^2}\right) \\ \times \sum_{\substack{\delta_1, \dots, \delta_k \mid q \\ \epsilon_1, \dots, \epsilon_k \mid q}}\left(\prod_{i=1}^{k} \frac{\mu(\delta_i)\mu(\delta_i')}{g(\delta_i')} \prod_{j=1}^{k}\frac{\mu(\epsilon_j) \mu(\epsilon_j')}{g(\epsilon_j')}\right) y_{a_1, \dots, a_k}^{(m)} y_{b_1, \dots, b_k}^{(m)}. \end{multline*} Let $y_{\max}^{(m)} = \max_{r_1, \dots, r_k} |y_{r_1, \dots, r_k}^{(m)}|$.  From \cite[eq. (6.10)]{maynard14}, we have $y_{\max}^{(m)} \ll F_{\max} \frac{\phi(W)}{W} \log{R}$. Inserting these bounds into the previous display, we find that  \eqref{eq:verycomplicated} is
\begin{multline*} \ll \frac{X_N}{\phi(W)}\bigg(\sum_{\substack{u < R \\ \gcd(u,W)=1}}\frac{\mu(u)^2}{g(u)}\bigg)^{k-1} \left(\sum_{s} \frac{\mu(s)^2}{g(s)^2}\right)^{k(k-1)} {y_{\max}^{(m)}}^2 \\ \ll F_{\max}^2 \cdot \frac{X_N}{\phi(W)} \left(\frac{\phi(W)}{W}\right)^{k+1} (\log{R})^{k+1} \ll F_{\max}^2 \left(\frac{\phi(W)^k}{W^{k+1}}\right) N (\log{N})^k. \end{multline*}
We used here that there are only $O(1)$ possibilities for the $\delta_i$ and $\epsilon_j$, and that for each of these, $\prod_{i}\frac{1}{g(\delta_i')}  \prod_{j} \frac{1}{g(\epsilon_j')} \le 1$. Referring back to \eqref{eq:complicatedmainterm}, we see that our original main term contributes
\[ \ll F_{\max}^2 \left(\frac{\phi(W)^k}{W^{k+1}}\right) N (\log{N})^k \sum_{q \in \I} \frac{1}{q(q-1)} = o\left(\frac{\phi(W)^k}{W^{k+1}} N (\log{N})^k\right), \]
as desired.

\begin{remark}
The truth of Theorem \ref{thm:main} could also have been predicted on heuristic grounds. Indeed, there are well known heuristics for Artin's primitive root conjecture, suggesting even the `correct' value of $c_g$ (see \cite[\S\S2--5]{moree12}), as well as heuristics for the prime $k$-tuples conjecture (see for instance,
\cite[pp. 14--15]{CP05}), and these can be fitted together. As an example, this combined heuristic suggests that the count of twin prime pairs $p, p+2$ with $p \leq x$ and with $2$ a primitive root of both $p$ and $p+2$ should be approximately
\[ \mathfrak{S} \int_{2}^{x} \frac{dt}{(\log{t})^2}, \quad\text{where}\quad \mathfrak{S} := \frac{1}{4} \prod_{p > 3} \left(1-\frac{3}{(p-1)^2}\right). \]
Quantitative conjectures of this kind, but in the context of primes represented by a single irreducible polynomial rather than primes produced by linear forms, appear in recent work of Moree \cite{moree07} and of Akbary and Scholten \cite{AS14}.
\end{remark}

\section{Concluding remarks}
We conclude with a proof of the following result, which seems of independent interest:

\begin{thm}[conditional on GRH]\label{thm:consecutive} Fix an integer $g\neq -1$ and not a square. For every positive integer $m$, there are $m$ consecutive primes all of which possess $g$ as a primitive root.
\end{thm}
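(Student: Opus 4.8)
The plan is to rerun the argument of Section~\ref{sec:proof} with a single modification to the choice of $\nu\bmod W$: in addition to the conditions of Lemma~\ref{lem:presieving}, I would force every integer in $[n,n+h_k]$ that is not one of $n+h_1,\dots,n+h_k$ to be composite. Write $\mathcal{G}$ for the finite set of integers $h$ with $0\le h\le h_k$ and $h\notin\Hh$. Odd $h\in\mathcal{G}$ need no attention: since $\nu$ is odd, $n+h$ is then even, hence composite once $N$ is large. For each even $h\in\mathcal{G}$ I would choose a prime $p_h$, with the $p_h$ pairwise distinct, lying outside the finite set of primes dividing $g_0\prod_{i=1}^{k}(h-h_i)(h-h_i+1)$, and impose $\nu\equiv-h\pmod{p_h}$. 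Since $\mathcal{G}$ is finite, the $p_h$ are bounded in terms of $m$ and $g$, so $p_h\mid W$ for $N$ large; the condition $p_h\nmid\prod_i(h-h_i)(h-h_i+1)$ is exactly what guarantees that these congruences are compatible with (i)--(iii), and the construction in the proof of Lemma~\ref{lem:presieving} adapts verbatim once these extra residues are prescribed for $\nu_1$ (note $p_h\nmid g_0$, so condition (iii) is unaffected). With $\nu$ so chosen, every $n\equiv\nu\pmod{W}$ has all of $[n,n+h_k]\setminus\{n+h_1,\dots,n+h_k\}$ composite.

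Next I would observe that Proposition~\ref{prop:main} and Maynard's Proposition~\ref{prop:main-maynard} used only properties (i)--(iii) of $\nu$, so the asymptotics of $S_1,S_2,\tilde{S}_1,\tilde{S}_2$ are untouched by the new choice. Taking $k$ large, $\theta$ near $\tfrac14$, and $F$ near-optimal, one arranges $S_2/S_1\to L$ with $L>m-1$ (using Proposition~\ref{prop:mklower}), and $\tilde{S}_1\sim S_1$, $\tilde{S}_2\sim S_2$.

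The key step is the observation that all but a $w$-negligible set of $n$ have the property that \emph{every} prime among $n+h_1,\dots,n+h_k$ carries $g$ as a primitive root. Writing $A(n)$ for the number of $i$ with $n+h_i$ prime and $B(n)\le A(n)$ for the number with $n+h_i\in\tilde{\Pp}$, we have $S_2=\sum_n A(n)w(n)$ and $\tilde{S}_2=\sum_n B(n)w(n)$, so summing \eqref{eq:os2} over $m$ gives $\sum_n(A(n)-B(n))w(n)=S_2-\tilde{S}_2=o(S_1)$; hence the total weight of $n$ with $A(n)>B(n)$ is $o(S_1)$. Since $B(n)\le k$, deleting these $n$ changes both $\sum_n w(n)$ and $\sum_n B(n)w(n)$ by $o(S_1)$, so the weighted average of $B(n)$ over the surviving $n$ still tends to $L>m-1$. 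Therefore, for every large $N$ there is an $n\in[N,2N)$ with $n\equiv\nu\pmod{W}$, $A(n)=B(n)$, and $B(n)\ge m$. For such an $n$, the primes in $[n+h_1,n+h_k]$ are exactly the prime values among $n+h_1,\dots,n+h_k$ (all other integers in the interval being composite); there are at least $m$ of them, all lie in $\tilde{\Pp}$, and the integers strictly between consecutive members of this list are composite, so these members are consecutive primes. Any $m$ successive ones thus form $m$ consecutive primes each having $g$ as a primitive root, proving Theorem~\ref{thm:consecutive}.

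I expect the main obstacle to be bookkeeping of two sorts: verifying that the ``composite-gap'' congruences $p_h\mid\nu+h$ do not accidentally force $p_h\mid\nu+h_i$ or $p_h\mid\nu+h_i-1$ for some $i$ (which would wreck (i) or (ii)) nor disturb the Kronecker condition (iii) --- all handled by the choice of $p_h$ above together with $p_h\nmid g_0$ --- and that the already-proven bound $S_2-\tilde{S}_2=o(S_1)$ is genuinely strong enough to exclude \emph{all} bad prime slots at once, which is the one conceptual point beyond the proof of Theorem~\ref{thm:main}.
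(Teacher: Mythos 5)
Your proposal is correct and is essentially the paper's own argument: the paper likewise forces compositeness at every even $h\in[h_1,h_k]\setminus\Hh$ by adding congruences $\nu\equiv -h\pmod{p^{(h)}}$ for distinct auxiliary primes dividing $W$ (using parity for odd $h$), and it extracts an $n$ with $A(n)=B(n)\ge m$ by the same Markov-type averaging, phrased probabilistically via $\Prob(X\ge m)\gg 1$ and $\Prob(Y>0)\le\E[Y]=o(1)$. Your bookkeeping conditions $p_h\nmid g_0\prod_i(h-h_i)(h-h_i+1)$ play exactly the role of the paper's consistency checks in Lemma \ref{lem:presieving}, so there is no gap.
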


Theorem \ref{thm:consecutive} might be compared with Shiu's celebrated result \cite{shiu00} that each coprime residue class $a\bmod{q}$ contains arbitrarily long runs of consecutive primes. Our proof of Theorem \ref{thm:consecutive} is similar in spirit to a short proof of Shiu's theorem recently given by Banks, Freiberg, and Turnage-Butterbaugh \cite{BFTB14}. 

It will be useful to first translate the proof of Theorem \ref{thm:main} into probabilistic terms. Let $k$ be a fixed positive integer, and let $h_1, \dots, h_k$ be given by \eqref{eq:hidef}. We view the set of $n \in [N,2N)$ with $n \equiv \nu\pmod{W}$ as a finite probability space where the probability mass at each $n_0$ is given by
\[ w(n_0)/\sum_{\substack{N \leq n < 2N \\ n \equiv \nu\pmod{W}}} w(n). \]
Here the weights $w(n)$ are assumed to be of the form specified in Proposition \ref{prop:main-maynard}. Introduce the random variables
\[ X := \sum_{i=1}^{k} \chi_{\Pp}(n+h_i)\quad\text{and}\quad Y:=\sum_{i=1}^{k} \chi_{\Pp\setminus \tilde{\Pp}}(n+h_i). \]
Then $\E[X]= S_2/S_1$. Given suitable parameters $F$ and $\theta$, Proposition \ref{prop:main-maynard} gives us the limiting value of $\E[X]$ as $N\to\infty$. Combining Propositions \ref{prop:main-maynard} and \ref{prop:mklower}, we see that for $k$ large enough in terms of $m$, we can choose parameters so this limiting value exceeds $m-1$. On the other hand, it was shown in \S\ref{sec:proof} that (with the same choice of parameters) $\E[Y] = o(1)$ as $N\to\infty$. Thus, $\E[X-Y] > m-1$ for all large $N$. But $X-Y = \sum_{i=1}^{m} \chi_{\tilde{\Pp}}(n+h_i)$. Hence, for some $n \in [N,2N)$, the list $n+h_1, \dots, n+h_k$ contains at least $m$ primes having $g$ as a primitive root. Theorem \ref{thm:main} follows, with $C_m = h_k - h_1$.
 
We now present the minor variation of this argument needed to establish Theorem \ref{thm:consecutive}.

\begin{proof}[Proof of Theorem \ref{thm:consecutive}] Given $m$, we fix a large enough value of $k$ (and parameters $F, \theta$) so that the limiting value of $\E[X]$ exceeds $m-1$. Then for all large $N$,  \[ \Prob(X \geq m) \geq \E\left[\frac{X-(m-1)}{k}\right] = \frac{1}{k} (\E[X]-(m-1)) \gg 1. \] Note that $\Prob(Y > 0) \leq \E[Y]= o(1)$, as $N\to\infty$. So for large $N$, there is a positive probability that both $X \geq m$ and $Y=0$. This allows us to select $n \in [N,2N)$ with $n \equiv \nu\pmod{W}$ satisfying
\begin{enumerate}
\item[(i)] at least $m$ of $n+h_1, \dots, n+h_k$ are prime,
\item[(ii)] all of the primes among $n+h_1, \dots, n+h_k$ possess $g$ as a primitive root.
\end{enumerate}
We will argue momentarily that we can also assume
\begin{itemize}
\item[(iii)] the only primes in the interval $[n+h_1, n+h_k]$ are the primes in the list $n+h_1, \dots, n+h_k$.
\end{itemize}	
From (i), (ii), and (iii), we see that the set of primes in $[n+h_1, n+h_k]$ contains at least $m$ elements, all of which have $g$ as a primitive root. Theorem \ref{thm:consecutive} follows. 

In order to show we may assume (iii), we tweak the choice of the residue class $\nu \bmod{W}$ from which $n$ is sampled. In the proof of Lemma \ref{lem:presieving}, we chose $\nu_1$ as any odd integer avoiding $-h_1, \dots, -h_k$, $1-h_1, \dots, 1-h_k$ modulo $p$, for all odd $p\leq\log\log\log{N}$ not dividing $K$. We now add an extra condition on $\nu_1$. Choose distinct primes $p^{(h)} \in [\frac{1}{2}\log\log\log{N},\log\log\log N)$ for all even $h \in [h_1,h_k]\setminus \Hh$. We add the requirement that $\nu_1 \equiv -h \pmod{p^{(h)}}$ for each such $h$. This is consistent with our earlier restrictions, since $h$ is not congruent modulo $p^{(h)}$ to any of $h_1, \dots, h_k$ (since $h \not\in\Hh$) or to any of $h_1-1, \dots, h_k-1$ (since $h$ and the $h_i$ are all even). Using the resulting value of $\nu$ from Lemma \ref{lem:presieving}, we see that for even $h \in [h_1,h_k]\setminus\Hh$, we have $p_h \mid n+h$ whenever $n\equiv \nu\pmod{W}$. For all odd $h \in [h_1,h_k]$, we have trivially that $2 \mid n+h$ whenever $n \equiv \nu\pmod{W}$. Thus, $n+h$ is composite if $h \in [h_1,h_k] \setminus \Hh$, and so (iii) holds.\end{proof}

\providecommand{\bysame}{\leavevmode\hbox to3em{\hrulefill}\thinspace}
\providecommand{\MR}{\relax\ifhmode\unskip\space\fi MR }
\providecommand{\MRhref}[2]{%
  \href{http://www.ams.org/mathscinet-getitem?mr=#1}{#2}
}
\providecommand{\href}[2]{#2}

\end{document}